\documentclass[12pt]{amsart}

\usepackage{hyperref}
\usepackage{amsmath,amssymb,latexsym,amsthm}
\usepackage{url}
\usepackage{graphicx,color}
%
\newtheorem{lemma}{Lemma}
\newtheorem{proposition}{Proposition}
\newtheorem{theorem}{Theorem}

\newtheorem{example}{Example}
\newtheorem{corollary}{Corollary}

%
\def\C{\mathbb C}
\def\R{\mathbb R}

\def\N{\mathbb N}
%
\def\p{\partial}
\def\O{\mathcal O}
%

\def\sup{\mathop{\rm sup}}
\def\supp{\mathop{\rm supp}}

%
\def\ll{\left}
\def\rr{\right}

\newcommand{\norm}[1]{\left\|#1 \right\|}

\usepackage[margin=0.5cm]{caption}
\usepackage{subcaption}
\usepackage{transparent}

\usepackage{color}

\def\grad{\text{grad}}
\def\bar{\overline}

\def\tilde{\widetilde}
\def\inter{\text{int}}

\def\Src{\mathcal S}

\def\Rec{\mathcal R}
\def\Rem{\mathcal V}
\def\diam{\operatorname{diam}}
\def\div{\operatorname{div}}
\def\grad{\operatorname{grad}}

\renewcommand{\div}{\hbox{div}}




\newcommand{\hiddenfootnote}[1]{}

\newcommand{\B}{{\cal B}}  
  
\newcommand{\K}{{\cal K}}

\renewcommand{\O}{{\mathcal O}}

\def\tilde{\widetilde}
\def \bfo {\begin {eqnarray*} }
\def \efo {\end {eqnarray*} }
\def \ba {\begin {eqnarray*} }
\def \ea {\end {eqnarray*} }
\def \beq {\begin {eqnarray}}
\def \eeq {\end {eqnarray}}
\def \supp {\hbox{supp}\,}

\def \diam {\hbox{diam }}

\def \dist {\hbox{dist}}

\def\bra{\langle}
\def\cet{\rangle}

\def \p {\partial}

\title{Hyperbolic inverse problem with data on disjoint sets}

\author[Y. Kian]{Yavar Kian}
\address{Aix Marseille Univ, Université de Toulon, CNRS, CPT, Marseille, France}
\email{yavar.kian@univ-amu.fr}

\author[Y. Kurylev]{Yaroslav Kurylev}
\address{Department of Mathematics, University College London, Gower Street, London UK, WC1E 6BT}
\email{y.kurylev@ucl.ac.uk}

\author[M. Lassas]{Matti Lassas}
\address{University of Helsinki, Department of Mathematics and Statistics, P.O. Box 68, FI-00014, Finland}
\email{Matti.Lassas@helsinki.fi}

\author[L. Oksanen]{Lauri Oksanen}
\address{Department of Mathematics, University College London, Gower Street, London UK, WC1E 6BT}
\email{l.oksanen@ucl.ac.uk}

\date{\today}
\subjclass{Primary: 35R30}
\keywords{Inverse problems, wave equation, partial data}
\begin{document}
\begin{abstract}
We consider a restricted Dirichlet-to-Neumann map $\Lambda_{\Src, \Rec}^T$ associated with the operator
$\p_t^2 - \Delta_g + A + q$
where $\Delta_g$ is the Laplace-Beltrami 
operator 
of a Riemannian manifold $(M,g)$, 
and $A$ and $q$ are a vector field and a function on $M$.
The restriction $\Lambda_{\Src, \Rec}^T$
corresponds to the case where the Dirichlet traces are supported on $(0, T) \times \Src$
and the Neumann traces are restricted on $(0, T) \times \Rec$.
Here $\Src$ and $\Rec$ are open sets, which may be disjoint, on the boundary of $M$.
We show that $\Lambda_{\Src, \Rec}^{T}$ determines uniquely, up the natural gauge invariance, the lower order terms $A$ and $q$  
in a neighborhood of the set $\Rec$
assuming that 
$\Rec$ is strictly convex and that
the wave equation is exactly controllable from $\Src$ in time $T/2$. 
We give also a global result under a convex foliation condition.
The main novelty is the recovery of $A$ and $q$
when the sets $\Rec$ and $\Src$ are disjoint.
We allow $A$ and $q$ to be non-self-adjoint,
and in particular, the corresponding physical system may have dissipation of energy.
\end{abstract}
\maketitle

\section{Introduction}

Let $(M, g)$ be a smooth, connected and compact Riemannian manifold of dimension $n$
with nonempty boundary $\p M$,
let $A$ 
be a smooth complex valued vector field on $M$, 
and let $q$ be a smooth complex valued function on $M$.
We consider the wave equation with Dirichlet data $f \in C_0^\infty((0, \infty) \times \p M)$,
\beq
\label{eq_wave}
\begin{cases}
(\p_t^2 - \Delta_g + A(x) + q(x))u(t,x) = 0, &\text{in $(0, \infty) \times M$},
\\ 
u|_{(0, \infty) \times \p M} = f,  &\text{in $(0, \infty) \times \p M$},
\\ 
u|_{t = 0} = \p_t u|_{t = 0} = 0, &\text{in $M$},
\end{cases}
\eeq
and denote by $u_f= u(t,x)$ the solution of (\ref{eq_wave}).
For open and nonempty sets $\Src, \Rec \subset \p M$ and $T \in (0, \infty]$
we define the response operator,
\ba
\Lambda_{\Src, \Rec}^T : f \mapsto (\p_\nu u_f - \frac{1}{2} (A, \nu)_g u_f)|_{(0,T) \times \Rec}, 
\quad f \in C_0^\infty((0, T) \times \Src).
\ea
Here $\nu$ is the exterior unit normal vector field on $\p M$, and $(A, \nu)_g$ is the inner product of $A$ and $\nu$.
We use real inner products throughout the paper. If $A(x) =\sum_{j=1}^n A^j(x)\p_j$ in local coordinates, then 
$(A, \nu)_g$ is given by $g_{jk}A^j\nu^k$ locally.

When $f$ is regarded as a boundary source, the operator $\Lambda_{\Src, \Rec}^T$ models 
boundary measurements for the wave equation with sources on the set $(0, T) \times \Src$
and the waves being observed on $(0, T) \times \Rec$.  
We consider the inverse boundary value problem to determine the
manifold $(M, g)$, the vector field $A$ and the potential $q$ from $\Lambda_{\Src, \Rec}^T$. 

We have studied previously the determination of the geometry 
$(M, g)$
in the case that $A = 0$ and $q = 0$, see \cite{Lassas2012}.
This corresponds to the recovery of the leading order terms in the wave equation up to isometries.
It appears to us that when the sets $\Src$ and  $\Rec$ are disjoint, the recovery of the lower order terms
$A$ and $q$ is more complicated than the recovery of the leading order terms, and this is the main focus of the present paper. 
It should be emphasized that if $\Src$ and  $\Rec$ are disjoint and
if $\Lambda_{\Src,\Rec}^\infty$ is known only at a fixed frequency in the sense that its Fourier transform in time is given at fixed point, then it is not possible to recover even the geometry $(M,g)$, see \cite{Daude2015}.    

In order to recover $A$ and $q$, we develop a new technique that is based on exploiting strict convexity of the set $\Rec$. We recall that $\Rec$ is said to be strictly convex if its second fundamental form is positive definite, that is, $II(v,v) > 0$ for all $v \in T_x \Rec$ and $x \in \Rec$, where
$$
II(v,v) = (\nabla_v \nu, v)_g,     
$$
and $\nabla$ is the covariant derivative on $(M,g)$.
Using the convexity we construct a boundary source $f$ such that, at time $t=T$, the corresponding solution $u_f$ is essentially localized in a small set near $\mathcal R$.

The lower order terms $A$ and $q$ can be determined only up to the action of a group gauge transformations, that we will describe next.
Let $\kappa$ be a smooth nowhere vanishing complex valued function on $M$ satisfying $\kappa = 1$ on $\Rec$.
The response operator $\Lambda_{\Src, \Rec}^T$ does not change under the  transformation $(A, q) \mapsto (A_\kappa, q_\kappa)$ where
\begin{equation}
\label{gauge_A_Q}
A_\kappa = A + 2 \kappa^{-1} \grad_g \kappa, \quad 
q_\kappa = q + \kappa (A - \Delta_g) \kappa^{-1},	
\end{equation}
and $\grad_g$ is the gradient on $(M,g)$.
We refer to \cite{Kurylev1995} for a similar computation in the self-adjoint case. 
When $\mathcal U\subset M$ is a neighborhood of $\Rec$, we write
\ba
{\mathcal G}_{\mathcal U,\Rec}(A,q)=\{(A_\kappa|_{\mathcal U},q_\kappa|_{\mathcal U});\ \kappa\in C^\infty(\overline{\mathcal U}),\ \kappa \ne 0,\ \kappa|_\Rec=1\}
\ea
for the orbit of the group of gauge transformations on $\mathcal U$.


We recall that the wave equation (\ref{eq_wave}) is said to be exactly controllable from $\Src$ in time $T$ if 
the map
\begin{equation}
\label{def_exact_controllability}
f \mapsto (u_f(T), \p_t u_f(T)) : L^2((0,T) \times \Src) \to L^2(M) \times H^{-1}(M),
\end{equation}
is surjective. 
If there is such $T > 0$, then we say that (\ref{eq_wave}) is exactly controllable from $\Src$.
The exact controllability can be characterized in terms of the billiard flow of the manifold $(M,g)$ \cite{Bardos1992, Burq1997}.
The geometric characterization says roughly that all unit speed geodesics, continued by reflection on $\p M \setminus \Src$, must exit $M$ through $\Src$ during time $T$.
In particular, the geometric characterization implies that the exact controllability does not depend on the lower order terms $A$ and $q$.

In this paper we show the following theorem:

\def\K{\mathcal K}
\begin{theorem}
\label{th_main}
Let $\Src \subset \p M$ be open and suppose that the wave equation (\ref{eq_wave}) is exactly controllable from $\Src$ in time $T > 0$.
Let $\Rec \subset \p M$ be open and strictly convex. For $j=1,2$, we fix $A_j\in C^\infty(M;TM)$  $q_j\in C^\infty(M;\mathbb C)$, we denote by $\Lambda_{j,\Src, \Rec}^{2T}$ the response operator at time $2T$ associated with \eqref{eq_wave} with $A=A_j$ and $q=q_j$. Then assuming that  
\begin{equation}\label{t1a} \Lambda_{1,\Src, \Rec}^{2T}=\Lambda_{2,\Src, \Rec}^{2T},\end{equation}
there is a neighborhood $\mathcal U \subset M$ of $\Rec$, independent of $(A_1,q_1)$ and $(A_2,q_2)$,  such that 
\begin{equation}\label{t1b} (A_1|_{\mathcal U},q_1|_{\mathcal U})\in {\mathcal G}_{\mathcal U,\Rec}(A_2,q_2). \end{equation}
%
\end{theorem}

We show also a global uniqueness result
under the assumption that there is a convex foliation 
similar to that in 
\cite{Stefanov2013a}.
We assume that $\Sigma_s$, $s \in (0,1]$, satisfy the following:
\begin{itemize}
\item[(F1)] $\Sigma_s \subset M^\inter$ is a smooth manifold of codimension one.
\item[(F2)] 
The union $\Omega_s = \bigcup_{r \in (0,s)} \Sigma_r \subset M^\inter$ is open and connected,
and $\Omega_r \subset \Omega_s$ when $r < s$.
\item[(F3)] $\p \Omega_s = \Sigma_s \cup \Rec_s$ 
and $\Rec_s \subset \Rec$ where 
$\Rec_s = \overline{\Omega_s} \cap \p M.$
\item[(F4)] $\Sigma_s$ is strictly convex as a subset of $\p M_s$ where  
$$M_s = M \setminus (\Omega_s \cup \Rec_s).$$
\item[(F5)] The Hausdorff distances satisfy $\dist(\Omega_r, \Omega_s) \to 0$ as $r \to s$.
\item[(F6)] There is a set $\Rec_0 \subset \Rec$ 
such that $\dist(\Omega_s, \Rec_0) \to 0$ as $s \to 0$.
\end{itemize}

Furthermore, to simplify the notation, we assume
\begin{itemize}
\item[(F7)] $\Rec = \bigcup_{s \in (0,1]} \Rec_s$.
\end{itemize}

\begin{theorem}
\label{th_main_foliation}
Let $\Src \subset \p M$ be open and suppose that the wave equation (\ref{eq_wave}) is exactly controllable from $\Src$.
Let $\Rec \subset \p M$ be open and strictly convex and
let $\Sigma_s$, $s \in (0, 1]$, be a convex foliation satisfying (F1)-(F7).
Then \eqref{t1a} implies  that there exists  $\mathcal U \subset M$ an open set of $M$containing $\bar \Omega_1$ such that
\begin{equation}\label{t2a} (A_1|_{\mathcal U},q_1|_{\mathcal U})\in {\mathcal G}_{\mathcal U,\Rec}(A_2,q_2). \end{equation}
 
\end{theorem}

In Section \ref{sec_complementary_results} we show that, in the above theorem,
exact controllability from $\Src$ can be replaced with exact controllability from $\Rec$.
Our result is new even in the following case:

\begin{example}
Let $(M,g)$ be the Euclidean unit disk $\{z \in \C;\ |z| \le 1\}.$ Let 
$\epsilon > 0$ and define $\Rec = \{e^{i\theta}; \theta \in (-\epsilon, \pi + \epsilon)\}$. Let $\Src \subset \p M$ be open and nonempty.
Then $\Lambda_{\Src, \Rec}^\infty$ determines 
$A$ and $q$, up to the gauge transformations, in the convex hull of $\Rec$.
\end{example}
\begin{example}
Let $(M,g)$ be the Euclidean annulus  $$\{z \in \C;\ r_1\le|z| \le r_2\}, \quad  \text{with $0<r_1<r_2$}.$$ Let 
$\epsilon > 0$ and define $\Rec = \{r_2e^{i\theta}; \theta \in (-\epsilon, \pi + \epsilon)\}$. Let $\Src \subset \p M$ be open and nonempty and assume that $\Src$ satisfies the geometrical control condition of
\cite{Bardos1992},
for example, $\Src = \{z \in \C;\ |z|=r_1\}$.
Then $\Lambda_{\Src, \Rec}^\infty$ determines 
$A$ and $q$, up to the gauge transformations, in the convex hull of $\Rec$. 
\end{example}

Let us also point out that we could use a time continuation argument analogous to \cite[Lemma 4]{Lassas2012}
and prove Theorem \ref{th_main_foliation} also for measurements on a long enough but finite time interval. 

Our proof is based on the Boundary Control (BC) method.
The BC method was introduced by Belishev \cite{Belishev1987}, and it was first used in a geometric context in \cite{Belishev1992}.
Stability properties of the method are discussed in \cite{Anderson2004} and in the recent preprint \cite{Bosi2017}.
First order perturbations have been considered in 
the self-adjoint case in \cite{Katchalov1998, Kurylev2015}, and in the non-self-adjoint case in
\cite{Kurylev2000a, Kurylev1997c}. All the above results assume that $\Src = \Rec$.
The case of disjoint $\Src$ and $\Rec$ was first considered in the above mentioned \cite{Lassas2012} where no first order perturbation was present. 

In addition to \cite{Lassas2012}, we are aware of only two results on inverse boundary value problems with disjoint data analogous to the case $\overline\Src \cap \overline\Rec = \emptyset$.
Rakesh \cite{Rakesh2000} considers a wave equation 
on a one-dimensional interval with 
sources supported on one end of the interval and the waves observed on the other end,
and Imanuvilov, Uhlmann, and Yamamoto \cite{Imanuvilov2011a} proved that 
a zeroth order term in a Schr\"odinger equation 
on a two-dimensional domain homeomorphic to a disk, whose boundary
is partitioned into eight parts $\Gamma_1,\Gamma_2,\dots,\Gamma_8$ in the clockwise order, is determined by boundary measurements with Dirichlet data supported on $\Src=\Gamma_2\cup\Gamma_6$ and the Neumann trace observed on $\Rec=\Gamma_4\cup\Gamma_8$.

Let us mention also the result on recovery of a conformal scaling factor in the metric tensor given the Dirichlet-to-Neumann map \cite{Stefanov2015}
that, analogously to our result, uses local convexity of the boundary.
The proof \cite{Stefanov2015} is based on a reduction to the boundary rigidity result \cite{Stefanov2013c} and this approach seems to require that $\Src = \Rec$.
Let us also mention that, using the equivalence between  inverse problems for hyperbolic equation and inverse problems stated for other equations described in \cite{Katchalov2004,KOSY},  the result of this paper can also be applied in other context for different types of equations. 

A vast majority of results on inverse boundary value problems 
assume that $\overline\Src \cap \overline\Rec \ne \emptyset$.
For this type of non-disjoint, partial data results, we refer to 
\cite{Caro2009, Chung2014, DosSantosFerreira2007, Greenleaf2001a, Guillarmou2011, Imanuvilov2012, Imanuvilov2010b, Kenig2013}.

\subsection{Outline of the paper}

In Section 2.1 we first prove a version of Blagove{\v{s}}{\v{c}}enski{\u\i}'s identity that, given the response operator $\Lambda_{\Src,\Rec}^{2T}$, allows us to compute inner products of the form
    \begin{equation}\label{outline1}
\bra v_\phi(T,\cdot) , u_f(T,\cdot) \cet _{L^2(M)},
    \end{equation}
where $u_f$ is the solution of (\ref{eq_wave}), with $f$ supported in $(0,2T) \times \Src$,
and $v_\phi$ is the solution of a dual problem, see (\ref{eq_wave_adjoint1}) below, with the source $\phi$ supported in $(0,2T) \times \Rec$.

It follows from approximate controllability, as recalled in Section 2.2, that if for a fixed $f$ the inner products (\ref{outline1}) vanish for all $\phi$ in $C_0^\infty((0,2T) \times \Rec)$, then $u_f(T,\cdot)$ must be supported outside the set of points $x \in M$
satisfying $d(x,\Rec) < T$. 

A key idea in the proof is to use a more refined argument to show that for a point $x \in M$ near $\Rec$ we can choose a set $\mathcal B \subset (0,2T) \times \Rec$ such that if 
(\ref{outline1}) vanish for all $\phi$ in $C_0^\infty(\mathcal B)$, then $u_f(T,\cdot)$ must be supported outside a set that isolates a neighbourhood of $x$ as in Figure \ref{fig_Mh} in Section \ref{sec_convexity}.
This construction uses the convexity of $\Rec$ and it is carried out in Section 3.1.

Using the assumption that the wave equation (\ref{eq_wave}) is exactly controllable from $\Src$ in time $T > 0$, we then show in Section 3.2 that it is possible to construct a sequence of sources $(f_k)_{k \in \N}$ in 
$L^2((0,T) \times \Src)$ 
so that, when restricted near $\Rec$, the functions $u_{f_k}(T,\cdot)$ converge to a point mass $\kappa \delta_x$ at $x$, with $\kappa$ a nonzero constant.
 
We finish the proof of Theorem \ref{th_main} in Section 3.3. The idea is to substitute $u_{f_k}(T,\cdot)$ in (\ref{outline1}) and recover $\kappa v_\phi(T,x)$ after passing to the limit. Here the constant $\kappa$ depends on the choice of the point $x$, and some technical work is required in order to recover $\kappa(x) v_\phi(T,x)$, with $\kappa$ a smooth function near $\Rec$. 
Having the functions $\kappa(x) v_\phi(T,x)$ at hand, it is straightforward to find out which wave equation they satisfy. This will give us $A_\kappa$ and $q_\kappa$
as in (\ref{gauge_A_Q}).

The global result in Theorem \ref{th_main_foliation} is proven in Section 4 by iterating the local construction along the convex foliation. Finally in Section 5, we give some complementary results.  

\section{Tools for the inverse problem}

In this section we present the two main components of the Boundary Control method:
an integration by parts technique 
originating from Blagove{\v{s}}{\v{c}}enski{\u\i}'s study of the $1+1$ dimensional wave equation \cite{Blagovescenskiui1971},
and a density result based on the hyperbolic unique continuation result by Tataru \cite{Tataru1995}.

\subsection{Blagove{\v{s}}{\v{c}}enski{\u\i}'s identity}
\label{sec_Blago}

Let $\Gamma \subset \p M$ and $B \subset M^\inter$ be open, and 
let $\kappa : B \to \C$ be a smooth function satisfying 
$$\kappa(x)\neq 0,\quad x\in B.$$
We define for $f \in \mathcal C_0^\infty((0, \infty) \times S)$, 
$$
\Lambda_\Gamma f = (\p_\nu u - \frac{1}{2} (A, \nu)_g u)|_{(0,\infty) \times \Gamma},
\quad \mathcal T_{B, \kappa} f =  \kappa u |_{(0,\infty) \times B},
$$
where $u$ is the solution of (\ref{eq_wave}), and
write $\mathcal T_B = \mathcal T_{B, \kappa}$ when considering a fixed $\kappa$.

For all $\tau>0$ we define  $K_\Gamma^\tau$ and $K_{B,\kappa}^\tau$, for any $f\in C_0^\infty((0, \infty) \times \Src)$, $(t,x)\in (0,+\infty)\times\partial M$ and $(t,y)\in (0,+\infty)\times B$ , by
\begin{eqnarray}
\label{def_K_Gamma}
K_\Gamma^\tau f(t,x) &=& \Lambda_\Gamma  J^\tau f(\tau-t,x)-\tilde J^\tau\Lambda_\Gamma f(t,x) ,
\\ \label{def_K_B}
K_{B,\kappa}^\tau f (t,y) &=& \mathcal T_{B, \kappa}  J^\tau f(\tau-t,y)-\tilde J^\tau\mathcal T_{B, \kappa} f(t,y) ,
\end{eqnarray}
where
$J^\tau\psi(t,x) = \frac{1}{2} \int_{\tau-t}^{t+\tau} \psi(s,x) ds$ and $\tilde J^\tau\psi(t,x) = \frac{1}{2} \int_t^{2\tau - t} \psi(s,x) ds$.

Let us now 
consider the adjoint problems
\beq
\label{eq_wave_adjoint1}
\begin{cases}
\p_t^2v - \Delta_gv -\overline{A}v + (\overline{q}- \div_g \overline{A})v= 0, &\text{in $(0, \infty) \times M$},
\\ 
v|_{(0, \infty) \times \p M} = \phi,  &\text{in $(0, \infty) \times \p M$},
\\ 
v|_{t = 0} = \p_t v|_{t = 0} = 0, &\text{in $M$},
\end{cases}
\eeq

\beq
\label{eq_wave_adjoint2}
\begin{cases}
\p_t^2w - \Delta_gw -\overline{A}w + (\overline{q}- \div_g \overline{A})w=   H, &\text{in $(0, \infty) \times M$},
\\ 
w|_{(0, \infty) \times \p M} = 0,  &\text{in $(0, \infty) \times \p M$},
\\ 
w|_{t = 0} = \p_t w|_{t = 0} = 0, &\text{in $M$},
\end{cases}
\eeq
where $H \in C_0^\infty((0,+\infty)\times B)$,  $\phi \in C_0^\infty((0,\infty) \times \p M)$, $\div_g$  the divergence on $(M,g)$. We fix $v_f$ (resp. $u_f$, $w_H$) the unique solution of \eqref{eq_wave_adjoint1} (resp. \eqref{eq_wave}, \eqref{eq_wave_adjoint2}) lying in $C^1((0,+\infty);L^2(M))\cap C((0,+\infty);H^1(M))$.
Now let us consider the following identity

\begin{lemma}[Blagove{\v{s}}{\v{c}}enski{\u\i} type identity]
\label{lem_blago}
Let $\tau > 0$ and 
let $\Gamma \subset \p M$ and $B \subset M^\inter$ be open. 
Then for functions 
$f \in L^2_{loc}((0, +\infty) \times \Src)$, $\phi \in L^2_{loc}((0, \infty) \times \Gamma)$, 
$H \in L^2_{loc}((0, +\infty) \times B)$
we have
\begin{eqnarray}
\label{blago_inner_prod1}
\bra v_\phi(\tau,\cdot) , u_f(\tau,\cdot) \cet _{L^2(M)} 
= \bra \phi , K_\Gamma^\tau f\cet _{L^2((0, \tau) \times \Gamma)}.
\end{eqnarray}
\begin{eqnarray}
\label{blago_inner_prod2}
\bra w_{\overline{\kappa}H}(\tau,\cdot) , u_f(\tau,\cdot) \cet _{L^2(M)} 
= \bra H , K_{B,\kappa}^\tau f\cet _{L^2((0, \tau) \times \Gamma)}.
\end{eqnarray}
\end{lemma}
\begin{proof} Since the proof of \eqref{blago_inner_prod1} and \eqref{blago_inner_prod2} are similar, we will only treat \eqref{blago_inner_prod1}. Without loss of generality and by density, we assume that $f \in C^\infty_0((0, +\infty) \times \Src)$, $\phi \in C^\infty_0((0, \infty) \times \Gamma)$.
For $t \in (0,\tau)$ and $s \in (0, 2\tau)$,  we start by considering $$S(t,s)=\bra v_\phi(t,\cdot) , u_f(s,\cdot) \cet _{L^2(M)}.$$
Recall that 
\ba
&&(\p_t^2 - \p_s^2) S(t,s)
\\&&\quad=  
\bra  (\Delta_g +  \overline{A} - (\overline{q}- \div_g \overline{A}))  v_\phi(t,\cdot), u_f(s,\cdot)\cet _{L^2(M)}
\\&&\qquad-\bra v_\phi(t,\cdot),(\Delta_g - A - q) u_f(s,\cdot)\cet _{L^2(M)}
\\&&\quad= \bra  \p_\nu v_\phi(t,\cdot) + \frac 1 2 (\overline{A}, \nu)_g v_\phi(t,\cdot),f(s,\cdot)\cet _{L^2(\p M)}
 \\&&\qquad-\bra \phi(t,\cdot),\Lambda_{\Gamma} f(s,\cdot)\cet _{L^2(\p M)}.
\ea

Thus, fixing
$$\begin{aligned}
F(t,s)=& \bra  \p_\nu v_\phi(t,\cdot) + \frac 1 2 (\overline{A}, \nu)_g v_\phi(t,\cdot),f(s,\cdot)\cet _{L^2(\p M)}\\
 \ &-\bra \phi(t,\cdot),\Lambda_{\Gamma} f(s,\cdot)\cet _{L^2(\p M)},
\end{aligned}$$
  we deduce that the function $S$
satisfies the $1+1$ dimensional wave equation 
$$\left\{\begin{array}{ll}\partial_t^2S-\p_s^2S=F,\quad &\textrm{in}\ (0,\tau)\times(0,2\tau),\\  S(0,\cdot)=0,\quad \partial_tS(0,\cdot)=0,\quad &\textrm{in}\ (0,\tau),\\ S(\cdot,0)=0,\quad \partial_sS(\cdot,0)=0,\quad &\textrm{in}\ (0,2\tau).\end{array}\right.$$

We solve this wave equation on the triangle with corners $(\tau,\tau)$, $(0,0)$ and $(0,2\tau)$,  and obtain
\begin{equation}\label{blago_step_T}
\begin{aligned}
&\bra v_\phi(\tau,\cdot) , u_f(\tau,\cdot) \cet _{L^2(M)}\\
&= \frac{1}{2}
 \int_0^\tau \int_{t}^{2\tau-t} 
\bra  \p_\nu v_\phi(t,\cdot) + \frac 1 2 (\overline{A}, \nu)_g v_\phi(t,\cdot),f(s,\cdot)\cet _{L^2(\p M)} ds dt\\
&-\frac{1}{2} \int_0^\tau \int_{t}^{2\tau-t} 
\bra \phi(t,\cdot),\Lambda_{\Gamma} f(s,\cdot)\cet _{L^2(\p M)} ds dt.
\end{aligned}\end{equation}
 Now, for $h \in C^\infty_0((-\infty,\tau) \times \Src)$,  let  $u=R_\tau u_{R_\tau h}$  and $v= v_{\phi}$ where $R_\tau$ is the time reversal operator defined by
$R_\tau h(t,x)=h(\tau-t,x)$.
Then
$$\begin{aligned} &\bra h, \p_\nu v + \frac 1 2 (\overline{A}, \nu)_g v\cet _{L^2((0,T) \times \p M)}- \bra \p_\nu u - \frac 1 2 (A, \nu)_g u, \phi\cet_{L^2((0,\tau) \times \p M)}
\\ &=\bra (\p_t^2 - \Delta_g + A + q)u, v\cet_{L^2((0,\tau) \times M)}
\\ &\qquad
- \bra u, (\p_t^2 - \Delta_g - \overline{A} + (\overline{q}-\textrm{div}_g \overline{A})) v\cet _{L^2((0,\tau) \times M)}
=0.\end{aligned}$$


Therefore, we have
$$\bra h, \p_\nu v_\phi + \frac 1 2 (\overline{A}, \nu)_g v_\phi\cet _{L^2((0,\tau) \times \p M)}=\bra R_\tau\Lambda_\Gamma R_\tau h, \phi\cet_{L^2((0,\tau) \times \p M)}.$$
Fixing
$$h(t,\cdot):=\int_{t}^{2\tau-t} f(s,\cdot)ds$$
we find
$$\begin{aligned} &\frac{1}{2}
 \int_0^\tau \int_{t}^{2\tau-t} 
\bra  \p_\nu v_\phi(t,\cdot) + \frac 1 2 (\overline{A}, \nu)_g v_\phi(t,\cdot),f(s,\cdot)\cet _{L^2(\p M)} ds dt\\
&=\frac{1}{2}
 \int_0^\tau 
\bra  \p_\nu v_\phi(t,\cdot) + \frac 1 2 (\overline{A}, \nu)_g v_\phi(t,\cdot),h(t,\cdot)\cet _{L^2(\p M)}  dt\\
&=\frac{1}{2}
 \int_0^\tau 
\bra \phi(t,\cdot),R_\tau\Lambda_\Gamma R_\tau h(t,\cdot)\cet _{L^2(\p M)}  dt\\
&=\int_0^\tau 
\bra \phi(t,\cdot),R_\tau\Lambda_\Gamma J^\tau f(t,\cdot)\cet _{L^2(\p M)}  dt.\end{aligned}$$
Combining this with \eqref{blago_step_T}, we deduce \eqref{blago_inner_prod1}.

\end{proof}

\subsection{Approximate controllability}

Next we consider approximate controllability on a domain of influence. 
Let $T>0$, $\Gamma \subset \p M$ and $B \subset M^\inter$ be open,
and let $\Rem = \Gamma$ or $\Rem = B$.
Let $h : \bar \Rem \to \R$ be piecewise continuous, and
define the domain of influence
\ba
M(\Rem, h) = \{x \in M;\ \inf_{y \in \Rem} (d(x, y) - h(y)) \le 0\},
\ea
where $d$ is the distance function on $(M,g)$.
Moreover, we write 
\def\B{\mathcal B}
\ba
\B(\Rem, h; T) = \{(t, y) \in (0, \infty) \times \Rem;\ T - h(y) < t \}. 
\ea
We extend the notations $M(\Rem, h)$ and $\B(\Rem, h; T)$
for constants $h \in \R$ by interpreting $h$ as a constant function. 
Moreover, we define $M(x, h)$ by $M(\{x\}, h)$ for points $x \in \p M$.

We have the following approximate controllability result that is analogous to \cite[Lemma 5]{Lassas2012} and \cite[Lemma 2.5]{Kurylev2015}.

\begin{lemma}\label{Lem. Tataru}
Let $T>0$,  $\Gamma \subset \p M$ and $h : \Gamma\to \R$ to be piecewise continuous. Then, the set
$$\{ v_\phi(T,\cdot);\ \phi\in\mathcal C^\infty_0(\B(\Gamma, h; T) )\}$$
is dense in 
$
L^2(M(\Gamma, h)) = \{y \in L^2(M);\ \supp(y) \subset M(\Gamma, h) \}.
$
In the same way,   for $B \subset M^\inter$ an open set and $h : \bar B \to \R$ a piecewise continuous function satisfying $h > 0$ pointwise, the set
$$\{ w_H(T,\cdot);\ H\in\mathcal C^\infty_0(\B(B, h; T) )\}$$
is dense in 
$
L^2(M(B, h)) = \{y \in L^2(M);\ \supp(y) \subset M(B, h) \}.
$
\end{lemma}

The $L^2$-topology used in the above lemma does not give control over the point values of $v_\phi$. 
For this reason, we need occasionally also the following lemma, that is analogous to Lemma 3.7 in \cite{Kurylev2015}.

\begin{lemma}\label{lem_non_vanishing}
Let $T>0$,  $\Gamma \subset \p M$, $B\subset M^\inter$ and suppose that functions $h_1 : \Gamma\to \R$ and $h_2 : B \to \R$ are piecewise continuous. 
Let $x$ and $y$ be points in $M(\Gamma, h_1)^\inter$ and $M(B, h_2)^\inter$, respectively.
Then there exist
$\phi\in\mathcal C^\infty_0(\B(\Gamma, h_1; T) )$ and $H\in\mathcal C^\infty_0(\B(B, h_2; T) )$
such that the solution $v$ of \eqref{eq_wave_adjoint1} and the solution $w$ of \eqref{eq_wave_adjoint2} satisfy
$v(T,x) \ne 0$ and $w(T,y) \ne 0$.
\end{lemma}

\section{Local determination of the first order perturbation}

In this section we prove Theorem \ref{th_main}. Before formulating the geometric step of our proof, that is, Proposition \ref{prop_convergence_to_pt} below, let us introduce some notation.
Let $\Gamma \subset \p M$ be open. Then 
the boundary normal coordinates adapted to $\Gamma$ are 
given by the map 
\begin{eqnarray}
\label{boundary_normal_coords_s_y}
(s,y) \mapsto \gamma(s; y, -\nu), \quad y \in \Gamma,\ s \in [0, \sigma_{\Gamma,M}(y)),
\end{eqnarray}
where the cut distance $\sigma_{\Gamma,M} : \Gamma \to (0,\infty)$ is defined by 
\begin{eqnarray} 
\label{def_sigma}
\sigma_{\Gamma,M}(y) &=& \max \{ s \in (0, \tau_M(y)];\ d(\gamma(s; y, -\nu), \Gamma) = s\},
\\\notag
\tau_M(y) &=& \sup \{ s \in (0, \infty);\ \gamma(s; y, -\nu) \in M^\inter \}.
\end{eqnarray}
Here $\gamma(\cdot; x, \xi)$ is the geodesic with the initial data $(x,\xi) \in TM$,
and we recall that $\nu$ is the exterior unit normal on $\p M$.
We often write $\sigma_\Gamma = \sigma_{M,\Gamma}$.
Note that $\sigma_\Gamma(y) > 0$, see e.g. \cite[p. 50]{Katchalov2001}. 

We define $$M_\Gamma = \{\gamma(s;y,  -\nu);\ y \in \Gamma,\ s \in [0, \sigma_\Gamma(y))\}.$$ Then a point $x \in M_\Gamma$ is represented in the coordinates (\ref{boundary_normal_coords_s_y}) by $(s, y)$, 
where $s = d(x,\Gamma)$ and $y$ is the unique closest point to $x$ in $\Gamma$.

We will also use the notations 
$$\begin{aligned}
B(p,r) &= \{x \in M;\ d(x,p) < r\}, \quad p \in M,\ r > 0,
\\
B_{\p M}(y,r) &= \{x \in \p M;\ d(x,y) < r\}, \quad y \in \p M,\ r > 0.
\end{aligned}$$

\subsection{A convexity argument}
\label{sec_convexity}
Our aim is to 
construct a sequence of functions $(h_k)_{k\in\mathbb N}$  on $\Rec$
such that the difference of the domains of influences $M(\Gamma, s) \setminus M(\Rec, h_k)$
converges to a point $x \in M$ as $k \to \infty$.
Here $\Gamma \subset \Rec$ and $s > 0$ will be chosen suitably, and the point $x$ will lie in vicinity of $\Rec$.
We will use this construction to enforce a sequence of solutions of \eqref{eq_wave} to converge, at a fixed time, to a point mass at $x$.
The main result of this subsection can be stated as follows.

\begin{proposition}
\label{prop_convergence_to_pt}
Let $\Gamma \subset \p M$ be open and strictly convex and let $\K \subset \Gamma$ be compact.
Define for $p = (s,y) \in  M_\Gamma$ and small $\epsilon > 0$,
$$
C(p,\epsilon) = \ll((s-\epsilon, s + \epsilon) \cap [0,\infty)\rr) \times B_{\p M}(y, \epsilon),$$
in the coordinates (\ref{boundary_normal_coords_s_y}).
There exist a neighborhood $U  \subset M_\Gamma$ of $\K$ such that for all $p \in U$
there is $\epsilon > 0$ satisfying the following.
For any $x \in C_p = C(p,\epsilon)$ there exists a sequence of functions $(h_{k,x})_{k\in\mathbb N}$ in $C(\overline \Gamma)$ such that the set
$$X_{k,x} =  M(B_{\p M}(y, \epsilon), s+\epsilon)^\inter \setminus M(\Gamma, h_{k,x}),\quad k\in\mathbb N,$$
is a neighborhood of $x$, and $\diam(X_{k,x}) \to 0$ as $k \to \infty$.
\end{proposition}

The functions $h_{k,x}$ are given explicitly by (\ref{def_hkx}) below.
In order to prove this result we will need three intermediate results.

\begin{lemma}
\label{lem_convexity}
Let $\Gamma \subset \p M$ be open and strictly convex, and let $\K \subset \Gamma$ be compact. 
Then there is $\delta(\K) > 0$ and a neighborhood $U(\K) \subset M_\Gamma$ of $\K$ such that, for all $x \in U(\K)$ and $q \in B(x, \delta(\K)) \setminus \{x\}$, there is $z \in \Gamma$ satisfying $d(z,q) < d(z,x)$.
\end{lemma}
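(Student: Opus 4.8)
The plan is to reduce the statement to a uniform first-order separation estimate for the family of boundary distance functions $\{d(z,\cdot)\}_{z\in\Gamma}$, and then to extract the decisive sign from the positivity of the second fundamental form of $\Gamma$.

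For $z\in\Gamma$ and $p\in M_\Gamma$ at which $d(z,\cdot)$ is smooth, let $\eta_z(p)\in T_pM$ denote the unit vector $\grad_p d(z,\cdot)$, that is, the velocity at $p$ of the minimizing geodesic from $z$ to $p$. If $q=\gamma(r;p,\xi)$ with $\xi$ a unit vector and $r=d(p,q)$ small, then
\[
d(z,q)-d(z,p)=r\,\langle \eta_z(p),\xi\rangle+O(r^2),
\]
where the remainder is controlled by the Hessian of $d(z,\cdot)$ at $p$. Hence it suffices to produce a fixed neighborhood $\Gamma'$ of $\K$ with $\overline{\Gamma'}\subset\Gamma$, a neighborhood $U(\K)\subset M_\Gamma$ of $\K$, and a constant $c>0$ such that for every $p\in U(\K)$ and every unit $\xi\in T_pM$ there is $z\in\Gamma'$ with $\langle \eta_z(p),\xi\rangle\le -c$. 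Indeed, restricting to $z\in\overline{\Gamma'}$ and to $p$ with $d(p,\Gamma)$ bounded keeps the relevant geodesics short and bounded away from the cut locus, so the Hessian of $d(z,\cdot)$ at $p$ is bounded by a constant $C$ uniformly; choosing $\delta(\K)=c/(2C)$ then yields $d(z,q)<d(z,p)$ for all $q\in B(p,\delta(\K))\setminus\{p\}$, with $z$ chosen for the direction $\xi$ from $p$ to $q$.

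To establish the separation estimate I would work in the boundary normal coordinates $(s,y)$ adapted to $\Gamma$, in which $g=ds^2+h_{\alpha\beta}(s,y)\,dy^\alpha dy^\beta$ and strict convexity of $\Gamma$ means that the second fundamental form $-\tfrac12\partial_s h_{\alpha\beta}|_{s=0}$ is positive definite, with a lower bound uniform over the compact set $\K$. Fix $p=\gamma(s;y,\nu)$ with foot $y\in\K$ and $s=d(p,\Gamma)$ small, write $N(p)=\dot\gamma(s;y,\nu)$ for the inward normal direction at $p$, and decompose a unit vector $\xi=aN(p)+\xi^\perp$ into its normal and orthogonal parts in $T_pM$. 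If $a\le -\epsilon$, take $z=y$: the minimizing geodesic from $y$ to $p$ is the normal geodesic, so $\eta_y(p)=N(p)$ and $\langle\eta_y(p),\xi\rangle=a\le -\epsilon$. If instead $|\xi^\perp|\ge\epsilon$, take $z\in\Gamma'$ at a fixed boundary distance $\rho_0$ from $y$ in the direction of $\xi^\perp$; as $s\to0$ the vector $\eta_z(p)$ approaches $-\xi^\perp/|\xi^\perp|$, giving $\langle\eta_z(p),\xi\rangle<0$ with an order-one margin.

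The crux, and the only place where strict convexity is indispensable, is the remaining regime $a>-\epsilon$, $|\xi^\perp|<\epsilon$, in which $\xi$ is close to the purely inward normal direction $N(p)$. Here I would fix $z\in\Gamma'$ at boundary distance $\rho_0$ from $y$ and compute $\langle\eta_z(p),N(p)\rangle=\partial_s d(z,\gamma(s;y,\nu))$ by the first variation formula together with the expansion of $h$: positivity of the second fundamental form forces $\Gamma$ to curve away from its tangent plane, so that moving $p$ inward along the normal \emph{strictly decreases} the distance to $z$, yielding $\langle\eta_z(p),N(p)\rangle\le -c_0<0$ with $c_0$ depending only on $\rho_0$ and the convexity bound on $\K$, uniformly for all small $s$ including $s=0$. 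The Euclidean half-space model, in which $\Gamma$ is flat and this derivative is strictly positive, shows that convexity cannot be dispensed with. Since $a>-\epsilon$ and $|\xi^\perp|<\epsilon$ force $a$ close to $1$, this covers the last regime, and a compactness argument over the unit directions above $\overline{U(\K)}$ patches the three cases into a single uniform constant $c$. The main obstacle is precisely this last estimate: converting the pointwise positivity of the second fundamental form into the uniform strict inequality $\langle\eta_z(p),N(p)\rangle\le -c_0$ valid down to $s=0$ and uniformly over $y\in\K$.
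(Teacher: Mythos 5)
Your reduction requires the remainder in $d(z,q)-d(z,p)=r\langle\eta_z(p),\xi\rangle+O(r^2)$ to be uniform over $p\in U(\mathcal K)$ and over the chosen $z$, and that uniformity genuinely fails in your first regime. A bounded Hessian for $d(z,\cdot)$ at $p$ requires staying away from $z$ itself, not only from its cut locus: the Hessian blows up like $1/d(z,p)$ as $p$ approaches $z$ (so ``short geodesics'' make the bound worse, not better). In regime 1 you take $z=y$, the foot point of $p$, so $d(z,p)=s=d(p,\Gamma)$, and since $U(\mathcal K)$ is a neighborhood of $\mathcal K\subset\partial M$ it contains points with $s$ arbitrarily small; no uniform constant $C$ exists and $\delta=c/(2C)$ degenerates as $p\to\Gamma$. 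The failure is not merely a missing estimate. In the Euclidean unit disk take $y=(0,1)$, $p=(0,1-s)$, and a unit $\xi$ with normal component $a=-\epsilon$; for $q=\gamma(r;p,\xi)$ one computes $d(y,q)^2=s^2-2\epsilon s r+r^2$, so $d(y,q)>d(y,p)$ as soon as $r>2\epsilon s$, while for $s\ll\epsilon^2$ the disk still contains such points $q$ with $r$ up to order $s/\epsilon$. For these $q$ your chosen $z=y$ gives the wrong inequality outright, so regime 1 cannot be repaired by sharpening constants: for $p$ near $\Gamma$ and slightly tilted directions, the correct $z$ must depend on $q$ (or at least on the full geodesic through $p$ and $q$), not just on $y$.

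This is exactly how the paper's proof differs. For all directions that are not strongly inward (its case $\rho\le\rho_0$, which covers your regimes 1 and 2), the paper takes $z$ to be the point where the geodesic through $p$ and $q$ itself exits through $\Gamma$: strict convexity, via the expansion $s(t)\le s+t\rho-at^2/8$ in boundary normal coordinates, forces this geodesic to reach $\Gamma$ and to be minimizing up to the exit point, so that $d(z,q)=d(z,p)-d(p,q)$ holds \emph{exactly} — there is no second-order remainder to control, and this is precisely what makes $\delta(\mathcal K)$ uniform down to $s=0$. The first-variation argument, which is the content of your regimes 2–3, is reserved for strongly inward directions $\rho>\rho_0$, where the paper's reference point $\gamma_0(\tau_U)$ lies at a fixed positive distance $\tau_U$ from $p$, so the Hessian bound you want is actually available there; the uniformity follows from compactness and the inequality $-(\dot\gamma_0(0),\dot\gamma(0))_g\le-\rho_0^2$. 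In particular, the step you single out as the crux (uniformity of $\langle\eta_z(p),N(p)\rangle\le-c_0$ down to $s=0$) is true and is the easy part — it follows from transversality of minimizing chords to a strictly convex boundary plus compactness. The genuine obstruction in your argument is the non-uniform remainder in regime 1.
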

\begin{proof}
Let us consider a unit speed geodesic $\gamma(t) = (r(t), z(t))$ in coordinates (\ref{boundary_normal_coords_s_y})
and denote the initial data of $\gamma$ by
$$
\gamma(0) = (s, y),\quad \dot \gamma(0) = (\rho, \eta).
$$ 
We will first  show that there is a neighborhood $U \subset M$ of $\K$ and $\rho_0 > 0$ such that, for all $(s,y) \in \overline U$ and $\rho \in [-1,\rho_0]$,
the geodesic $\gamma$ intersects $\Gamma$ and is distance minimizing until the intersection.

To this end recall that, in coordinates (\ref{boundary_normal_coords_s_y}),
the metric tensor $g$ is of the form 

\begin{equation}
\label{g_in_bncoords}
g(s,y) = \left( \begin{array}{cc}
1 & 0 \\
0 & h(s,y) \\ \end{array} \right).
\end{equation}
We write $(x^1,\dots,x^n) = (s,y)$
and $\p_j = \p_{x^j}$.
Then it follows from (\ref{g_in_bncoords}) that 
the Christoffel symbols $\Gamma_{jk}^l$
satisfy for $\alpha,\beta = 2,\dots,n$,
$$
\Gamma_{\alpha 1}^\beta = \sum_{\kappa=2}^n\frac 1 2 h^{\beta \kappa} \p_1 h_{\kappa \alpha} = 
- \sum_{\kappa=2}^nh^{\beta \kappa} \Gamma_{\alpha \kappa}^1,
$$
and that the (scalar) second fundamental form of $\p M$ 
satisfies
$$
II(\p_\alpha, \p_\beta)(y) = - \sum_{\kappa=2}^nh_{\beta \kappa} \Gamma_{\alpha 1}^\kappa(0,y) = \Gamma_{\alpha \beta}^1(0,y).
$$
The geodesic equations imply that 
\ba
r(t) = s + t \rho - \frac{t^2}{2} \sum_{\alpha,\beta=2}^n\Gamma^1_{\alpha \beta}(s,y) \eta^\alpha \eta^\beta + \O(t^3),
\ea
see e.g. \cite[p. 113]{Sharafutdinov1994}.
Moreover, the strict convexity of $\Gamma$,
the lower semi-continuity of the cut distance function $\sigma_\Gamma$ and the compactness of $\K$ imply that there is a neighborhood $U_0 \subset M_\Gamma$
 of $\K$ and $c>a > 0$ such that, 
for all $(s,y) \in \overline{U_0}$,
\ba
 a |\eta|_{h}^2 \le \sum_{\alpha,\beta=2}^n\Gamma^1_{\alpha \beta}(s,y) \eta^\alpha \eta^\beta \le c |\eta|_{h}^2.
\ea

We will consider only the case $|\eta|_{h}^2> 1/2$.
Note that if $\rho_0 > 0$ is small and $|\eta|_h^2 \le 1/2$, then $\rho < \rho_0$ implies that $\rho < 0$
since $(\rho, \eta)$ is an unit vector. 
For small $t > 0,$ we have the bound
\ba 
 s + t \rho - c t^2 \le r(t)
\le s + t \rho - \frac{a t^2}{8}.
\ea 
The above formula implies that there is $\tau=\tau(s, y; \rho, \nu)$ such that
$r(\tau)=0$ and $r(t)>0$ for $t< \tau$.
In addition,  $\K  \subset \Gamma$  is closed and $\gamma$
 is unit speed.
Thus there is  $\sigma > 0$ such that, for $s \le \sigma,\, y \in \K, \, \rho \le \sigma$ with $(s, y) \in U_0$,
the geodesic $\gamma(t)$ intersects $\Gamma$  at $t=\tau$.
Moreover, $y(t) \in \Gamma$ for $0 \le t \le \tau$ and $\gamma(t)$ is the distance minimizing up to 
$z_\gamma=\gamma(\tau)$.
Thus, for $t \in (0,\tau)$,
\beq \label{5.3}
d(z_\gamma,\gamma(t)) = \tau -t< d(z_\gamma,\gamma(0)).
\eeq 
%
Let us emphasize that the case $s = 0$ is also allowed in the above argument.
We take $U = \{(s,y) \in U_0;\, s < \sigma \}$
and $\rho_0 = \sigma$.

Let $(s,y) \in U$, $(\rho, \eta)$ be a unit vector and suppose that $\rho > \rho_0$.
We may choose $\eta_0=b \eta, \, 0 < b<1,$ such that $(\rho_0, \eta_0)$ is also a unit vector at $(s,y)$.
 Then the geodesic $\gamma_0(s)$ with the initial data  
$$
\gamma_0(0) = (s, y), \quad \dot \gamma_0(0) = (\rho_0, \eta_0)
$$
intersects $\Gamma$ at $z_{\gamma_0}$ and is distance minimizing until the intersection. 

\begin{figure}
\def\svgwidth{9cm}
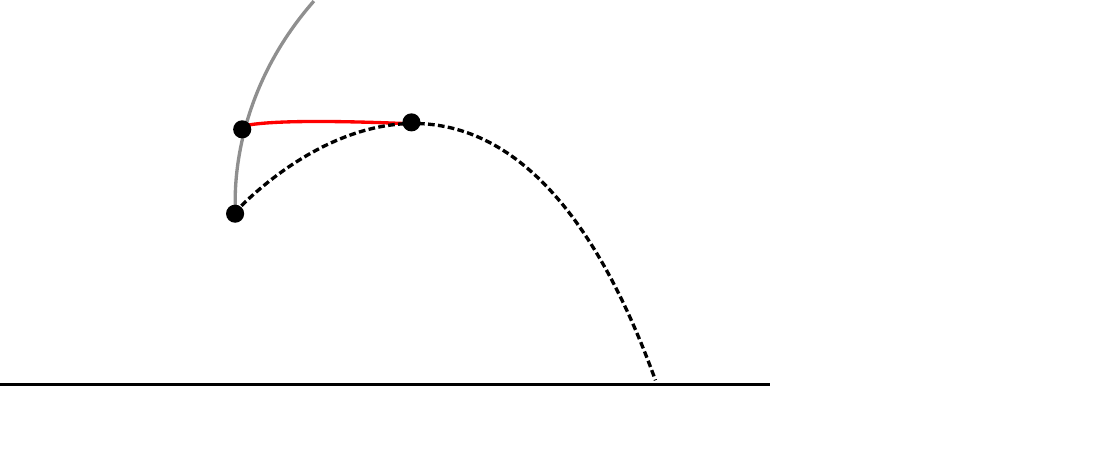
\caption{
A schematic of the short cut argument in the proof of Lemma \ref{lem_convexity}. The 
geodesics $\beta_t$, $\gamma$ and $\gamma_0$
are depicted by the
solid red, solid grey and dashed black curves, respectively.
}\label{fig_beta_shortcut}
\end{figure}
As $\rho_0 > 0$ we have that $\tau(s,y;\rho_0,\eta)$ is strictly positive 
for $(s,y) \in \overline U$ and $\eta_0 \in S := \{\eta \in \R^{n-1};\ 
|\eta|_h^2 + \rho_0^2 = 1\}$.
Together with continuity of $\tau$ this implies
$$
\tau_U := \min_{(s,y) \in \overline U, \eta_0 \in S}\tau(s,y; \rho_0, \eta_0)/2
> 0.
$$
Let $\beta_t : [0,l_t] \to M$ be the distance minimizing unit geodesic from $\gamma(t)$ to $\gamma_0(\tau_U)$,
see Figure \ref{fig_beta_shortcut}.
The first variation formula, see e.g. \cite[Prop. 10.2]{ONeill1983}, implies that 

$$
\p_t d(\gamma_0(\tau_U), \gamma(t))|_{t=0} = 
\p_t \left.\int_0^{l_t} |\dot \beta_t(r)| dr\right|_{t=0}
= -(\dot \beta_t(0), \p_t \beta_t(0))_g|_{t=0}.
$$
Observe that $\dot \beta_t(0)|_{t=0} = \dot \gamma_0(0)$
and that $\p_t \beta_t(0) = \dot\gamma(0)$. Hence
$$
\p_t d(\gamma_0(\tau_U), \gamma(t))|_{t=0} = -(\dot \gamma_0(0), \dot \gamma(0))_g = -\rho_0 \rho - b|\eta_0|_{h}^2 \le -\rho_0^2.
$$
It follows from the above inequality together with the relative compactness
of $U$ that there is $\delta > 0$ such that, if $t \in (0,\delta)$, $(s,y) \in U$, $\rho > \rho_0$,
then 
\beq \label{5.4}
d(z_{\gamma_0}, \gamma(t)) \le d(z_{\gamma_0}, \gamma(0)) - t\rho_0^2/2.
\eeq 
The claim now follows from (\ref{5.3}) and (\ref{5.4})
by taking $\delta(\mathcal K) = \min(\tau, \delta)$, $x= \gamma(0)$ and $q = \gamma(t)$ for $t \in (0,\delta(\mathcal K))$.
\end{proof}

\begin{lemma}
\label{lem_far_away}
Let $\Gamma \subset \p M$ be open and let $x \in M_\Gamma$.
Then, for all $q \in M(\Gamma, d(x,\Gamma)) \setminus \{x\}$, there is $z \in \overline{\Gamma}$ satisfying $d(z,q) < d(z,x)$.
\end{lemma}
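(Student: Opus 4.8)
The plan is to reduce the whole statement to a comparison of distances from a single well-chosen boundary point. First I would unwind the domain of influence: with the constant weight $h \equiv s := d(p,\Gamma)$, the definition of $M(\Gamma,h)$ collapses to
\[
M(\Gamma, s) = \{x\in M;\ d(x,\Gamma)\le s\}.
\]
Because $p\in M_\Gamma$, the point $p$ has a \emph{unique} nearest point $y^*\in\Gamma$ with $d(p,y^*)=s$, reached by the normal geodesic $\gamma(\,\cdot\,;y^*,\nu)$; this is the content of $s=d(p,\Gamma)<\sigma_\Gamma(y^*)$. Thus $p$ lies on the outer wavefront $\{d(\,\cdot\,,\Gamma)=s\}$, and the lemma asserts that $p$ is an \emph{extreme} point of the domain, in the sense that every other point $q$ of the domain is strictly closer than $p$ to some $z\in\overline{\Gamma}$.

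For the core step, given $q\in M(\Gamma,s)\setminus\{p\}$ I would pick a nearest point $w\in\overline{\Gamma}$ of $q$, which exists since $\overline{\Gamma}$ is compact and $d(q,\,\cdot\,)$ is continuous. Then $d(w,q)=d(q,\Gamma)\le s$ because $q\in M(\Gamma,s)$, while $d(w,p)\ge s$ since $w\in\overline{\Gamma}$ and $d(p,\overline{\Gamma})=d(p,\Gamma)=s$. Chaining these gives
\[
d(w,q)\ \le\ s\ \le\ d(w,p),
\]
so the choice $z=w$ already proves the lemma \emph{unless} both inequalities are equalities, i.e. unless $d(q,\Gamma)=s$ and $w$ is simultaneously a nearest point of $p$ and of $q$, each at distance exactly $s$.

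Ruling out this equality case is the main obstacle, and I would do it by showing that equality forces $p=q$, contradicting $q\ne p$. Let $\alpha,\beta$ be the unit-speed minimizing geodesics from $w$ to $q$ and from $w$ to $p$, both of length $s$; if $\dot\alpha(0)=\dot\beta(0)$ then uniqueness of geodesics gives $\alpha=\beta$, whence $q=\alpha(s)=\beta(s)=p$. To force the initial velocities to agree I would use that $w$ is a nearest point of \emph{both} $p$ and $q$: when $w$ lies in the open set $\Gamma$, the first variation formula (\cite{ONeill1983}, as in the proof of Lemma~\ref{lem_convexity}) makes each minimizing geodesic meet $\Gamma$ orthogonally, so $\dot\alpha(0)=\dot\beta(0)=\nu(w)$. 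The delicate point is therefore to guarantee that the nearest point may be taken in $\Gamma$ rather than on the topological boundary $\partial\Gamma$; this is where I expect the real work, and I would extract it from $p\in M_\Gamma$, arguing that $s<\sigma_\Gamma(y^*)$ makes $y^*$ the unique nearest point of $p$ even within $\overline{\Gamma}$ (a second minimizer at distance $s$ would be a cut-point phenomenon occurring no later than $\sigma_\Gamma(y^*)$). Once $w=y^*\in\Gamma$ is in hand, orthogonality and uniqueness of geodesics close the argument; alternatively one can keep a general nearest point $w$ and perturb $z$ along $\overline{\Gamma}$, decreasing $d(z,q)-d(z,p)$ below $0$ by the first variation unless $\dot\alpha(0)=\dot\beta(0)$, the admissibility of the perturbation direction when $w\in\partial\Gamma$ being precisely the subtlety to handle.
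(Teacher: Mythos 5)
Your proposal is correct and takes essentially the same route as the paper's proof: both choose a nearest point $z$ of $q$ in $\overline{\Gamma}$, use the chain $d(z,q)=d(q,\Gamma)\le d(p,\Gamma)\le d(z,p)$, and settle the degenerate case via the structure of $M_\Gamma$ (uniqueness of the nearest point of $p$ and orthogonality of minimizers at interior points of $\Gamma$, which forces $q$ onto the normal geodesic through $p$). Your case split (some inequality strict vs.\ all equalities) is just a reorganization of the paper's split ($z\ne y$ vs.\ $z=y$), and you are in fact more explicit than the paper about the subtlety that the nearest point could a priori lie in $\overline{\Gamma}\setminus\Gamma$.
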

\begin{proof}
Let $x = (s,y),\,s=d(x, \Gamma),$ in  coordinates (\ref{boundary_normal_coords_s_y}),
and let $z$ be a closest point to $q$ in $\overline \Gamma$. 
If $z \ne y$ then 
$$
d(z, q) = d(q,\Gamma) \le d(x, \Gamma) < d(z, x),
$$
since $z$ is not the closest point to $x$ in $\Gamma$.
Suppose now that $z = y$ and write $r = d(y,q)$.
Then $r \le d(x,\Gamma) = s$ and $q = (r,y)$ in  coordinates (\ref{boundary_normal_coords_s_y}).
Moreover $q \ne x$, whence $r < s$.
\end{proof}


\begin{lemma}
\label{lem_cutoff_set_Gamma}
Let $\delta > 0$
and let $p \in M_\Gamma$
have the boundary normal coordinates $(s,y)$. 
Then there is $\epsilon = \epsilon(p, \delta) > 0$ such that
for all $x \in B(p, \epsilon)$,
\beq
\label{cutoff_set_Gamma}
M( B_{\partial M}(y, \epsilon), s+\epsilon) \subset M(\Gamma, d(x, \Gamma)) \cup B(x,\delta).
\eeq
\end{lemma}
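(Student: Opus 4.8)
The plan is to prove the inclusion by a compactness-and-contradiction argument: I will show that the only way a point $z$ of the large domain of influence $M(B_{\partial M}(y,\epsilon), s+\epsilon)$ can fail to lie in the smaller domain $M(\Gamma, d(q,\Gamma))$ is by being pinched near the ``tip'' $p$, and hence near $q$, which then places it in $B(q,\delta)$.

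First I would record the elementary distance bounds. Any $z \in M(B_{\partial M}(y,\epsilon), s+\epsilon)$ admits a boundary point $w \in B_{\partial M}(y,\epsilon)$ with $d(z,w) \le s+\epsilon$, so $d(z,y) \le d(z,w) + d(w,y) \le s + 2\epsilon$. Since $d(p,\Gamma) = s$ and $q \in B(p,\epsilon)$, the fact that $x \mapsto d(x,\Gamma)$ is $1$-Lipschitz gives $|d(q,\Gamma) - s| \le \epsilon$. Hence any ``bad'' point, meaning one with $z \notin M(\Gamma, d(q,\Gamma))$, i.e. $d(z,\Gamma) > d(q,\Gamma)$, satisfies $d(z,y) \le s+2\epsilon$ together with $d(z,\Gamma) > s - \epsilon$, so it is trapped near the geodesic sphere of radius $s$ about $y$.

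Next I would argue by contradiction. If the statement failed, then for each $\epsilon_j = 1/j$ there would be $q_j \in B(p, 1/j)$ and $z_j \in M(B_{\partial M}(y, 1/j), s + 1/j)$ with $d(z_j, \Gamma) > d(q_j, \Gamma)$ and $d(z_j, q_j) \ge \delta$. By compactness of $M$ I pass to a subsequence with $z_j \to z_*$; plainly $q_j \to p$. Letting $j \to \infty$ in the bounds above, and using continuity of $d(\cdot, \Gamma)$, I obtain $d(z_*, y) \le s$ and $d(z_*, \Gamma) \ge \lim_j d(q_j, \Gamma) = s$. Combined with $d(z_*, \Gamma) \le d(z_*, y) \le s$, this forces $d(z_*, y) = d(z_*, \Gamma) = s$, so $y$ is a closest point of $\Gamma$ to $z_*$. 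The crux is then to identify $z_* = p$: taking a unit-speed minimizing geodesic $\sigma$ realizing $d(z_*, \Gamma) = s$ and ending at $y$, the first variation formula forces $\sigma$ to meet the hypersurface $\Gamma$ orthogonally at $y$, since $y$ is an interior point of the open set $\Gamma \subset \p M$; as $\sigma$ enters $M^\inter$, its initial velocity at $y$ is exactly $\nu$, so by uniqueness of geodesics $\sigma(t) = \gamma(t; y, \nu)$ and $z_* = \gamma(s; y, \nu) = p$. But $d(z_*, p) = \lim_j d(z_j, q_j) \ge \delta > 0$, a contradiction. The degenerate case $s = 0$, where $p = y \in \Gamma$, is covered verbatim, the limiting geodesic collapsing to the point $y = p$.

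I expect the main obstacle to be precisely this last identification of $z_*$ with $p$: one must ensure that the minimizer realizing $d(z_*, \Gamma) = s$ is the normal geodesic $\gamma(\cdot; y, \nu)$ underlying the boundary normal coordinate $(s,y)$ of $p$. This relies on $y$ being \emph{interior} to $\Gamma$ in $\p M$, so that the first variation is orthogonal to every boundary direction and the initial velocity is pinned to $\nu$. The possibility that $z_*$ is a cut point of $\Gamma$, with several closest points, does not interfere: any one closest point equal to $y$ already determines $z_* = \gamma(s; y, \nu) = p$ through the corresponding orthogonal minimizing geodesic.
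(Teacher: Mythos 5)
Your proof is correct and follows essentially the same route as the paper's: assume the inclusion fails along a sequence $\epsilon_n \to 0$, extract a convergent subsequence of bad points, pass to the limit to get $d(q',y)\le s$, $d(q',\Gamma)\ge s$, $d(q',p)\ge\delta$, and derive a contradiction from the fact that the first two conditions force $q'=p$. The only difference is that you spell out the identification $q'=p$ (via the first variation argument showing the minimizing geodesic meets $\partial M$ normally at $y$), a step the paper asserts without detail.
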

\begin{proof}
To prove (\ref{cutoff_set_Gamma}) we assume the contrary. Then there exist sequences
$\epsilon_n \to 0$,
$$
x_n = (r_n, z_n) \in B(p,\epsilon_n), 
\quad
q_n \in M(B_{\partial M}(y, \epsilon_n), s+ \epsilon_n),
$$ 
such that 
$d(q_n, \Gamma) > r_n$ and $d(q_n, x_n) \ge \delta$.
Taking if necessary a subsequence, we may assume that 
$q_n \to q$. Then it follows from the above that
$$
d(q, y) \le s,\quad d(q, \Gamma) \ge s, \quad d(q, p) \ge\delta.
$$
This is a contradiction since the first two conditions imply $q=p$.
\end{proof}

Armed with these lemmas we are now in position to complete the proof of Proposition \ref{prop_convergence_to_pt}.

\begin{proof}[Proof of Proposition \ref{prop_convergence_to_pt}]
We assume that $\delta  > 0$ and $U  \subset M_\Gamma$ are as in Lemma \ref{lem_convexity}. Moreover, for $p = (s,y) \in U$ we fix  $\epsilon = \epsilon(p, \delta) > 0$  as in Lemma \ref{lem_cutoff_set_Gamma}. We decrease $\epsilon > 0$, if necessary,
so that $B_{\p M}(y, \epsilon) \subset \Gamma$ and 
that 
in the coordinates (\ref{boundary_normal_coords_s_y}), 
$C(p,\epsilon) \subset B(p,\delta)$. Then, for any $x\in C(p,\epsilon)$ we set
\begin{equation}\label{def_hkx}
h_{k,x}(z)=d(z,x) - 1/k,\quad  z \in \Gamma.
\end{equation}
The set $X_{k,x}$ is visualized in Figure \ref{fig_Mh}. 
It is clear that $X_{k+1,x} \subset X_{k,x}$ 
and that $x \in X_{k,x}$ for all $k\in\mathbb N$. 
Suppose that $q \in \overline X_{k,x}$ for all $k\in\mathbb N$.
If $q \notin B(x,\delta)$ then (\ref{cutoff_set_Gamma}) yields that  $q \in M(\Gamma,d(x,\Gamma))$. 
Now Lemma \ref{lem_far_away} implies that $q \in M(\Gamma,h_{k,x})^\inter$ for large $k$
which is a contradiction with $q \in \overline X_{k,x}$.
If, however, $q \in B(x,\delta) \setminus \{x\}$, then Lemma \ref{lem_convexity} implies that $q \in M(\Gamma,h_{k,x})^\inter$
 for large $k$
which is again a contradiction.
Thus $q = x$.
As the sequence of sets $X_{k,x}$ is decreasing and $\underset{k \ge 1}{\bigcap} \bar{X_{k,x}} = \{x\}$,
we have that $\diam(X_{k,x}) \to 0$ as $k \to \infty$.
\end{proof}


\begin{figure}[t]
\begin{subfigure}[t]{5cm}
\centering
\setlength{\unitlength}{5cm}
\begin{picture}(1,0.64)%
\put(0,0){\includegraphics[clip=true, trim=0 3cm 0 0, width=5cm]{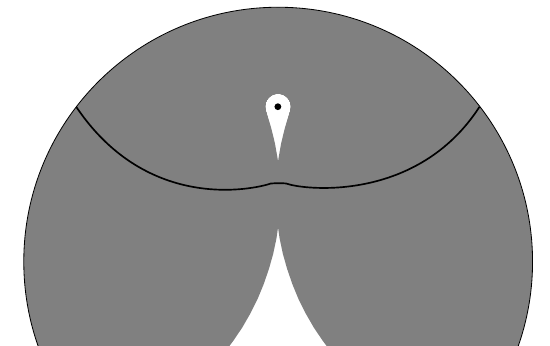}}%
\put(0.53778238,0.45){\color[rgb]{0,0,0}\makebox(0,0)[lb]{\smash{$x$}}}%
\end{picture}%
\end{subfigure}
\quad 
\begin{subfigure}[t]{5cm} 
\centering
\vspace{-2.6cm}
\def\svgwidth{6cm}
\begingroup%
  \makeatletter%
  \providecommand\color[2][]{%
    \errmessage{(Inkscape) Color is used for the text in Inkscape, but the package 'color.sty' is not loaded}%
    \renewcommand\color[2][]{}%
  }%
  \providecommand\transparent[1]{%
    \errmessage{(Inkscape) Transparency is used (non-zero) for the text in Inkscape, but the package 'transparent.sty' is not loaded}%
    \renewcommand\transparent[1]{}%
  }%
  \providecommand\rotatebox[2]{#2}%
  \ifx\svgwidth\undefined%
    \setlength{\unitlength}{160.55349121bp}%
    \ifx\svgscale\undefined%
      \relax%
    \else%
      \setlength{\unitlength}{\unitlength * \real{\svgscale}}%
    \fi%
  \else%
    \setlength{\unitlength}{\svgwidth}%
  \fi%
  \global\let\svgwidth\undefined%
  \global\let\svgscale\undefined%
  \makeatother%
  \begin{picture}(1,0.37874767)%
    \put(0,0){\includegraphics[width=\unitlength]{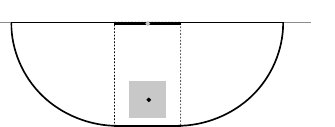}}%
    \put(0.83336453,0.3249619){\color[rgb]{0,0,0}\makebox(0,0)[lb]{\smash{$\Gamma$}}}%
    \put(0.46140047,0.02738486){\color[rgb]{0,0,0}\makebox(0,0)[lb]{\smash{$p$}}}%
    \put(0.45928582,0.34089035){\color[rgb]{0,0,0}\makebox(0,0)[lb]{\smash{$y$}}}%
  \end{picture}%
\endgroup%
\end{subfigure}
\caption{
{\em Left.} A part of the domain of influence $M(\Gamma, h_{k,x})$ in gray,
where $(M,g)$ is the Euclidean unit disk, $x=(0,3/5)$, $k=20$,
and $\Gamma$ is slightly less than the upper half circle.
The black curve is the boundary of $M(B_{\p M}(y, \epsilon), s+\epsilon)$
where $p=(s,y)$, $y=(0,1)$, $s=1/2$ and $\epsilon = 1/5$.
The set $X_{k,x}$ is the white region around $x$.
{\em Right.} Schematic diagram of the sets $B_{\p M}(y, \epsilon) \subset \Gamma$, in black around the gray point $y$, and $C(p,\epsilon')$, in gray around the black point $p=(s,y)$. Here $\epsilon > \epsilon' > 0$. The black curve is the boundary of $M(B_{\p M}(y, \epsilon), s + \epsilon$).}
\label{fig_Mh}
\end{figure}

\subsection{Localized solutions}
\label{sec_local_rec}

We denote by $|X|$ the Riemannian volume of a measurable set $X \subset M$. Also, we write 
$$
\mathbb U_{A,q}: L^2((0,T) \times \Src)\ni f \to u_{f}(T,\cdot)\in L^2(M),
$$
where $u_f$ is the solution of (\ref{eq_wave}).
If (\ref{eq_wave}) is exactly controllable from $\Src$ in time $T$, then $\mathbb U_{A,q}$
is surjective.
In this case, by Banach-Schauder theorem,  $\mathbb U_{A,q}$ admits a pseudoinverse 
\ba
\mathbb U_{A,q}^\dagger : L^2(M) \to L^2((0,T) \times \Src)
\ea
which is continuous. Moreover, the composition $\mathbb U_{A,q} \mathbb U_{A,q}^\dagger$ gives the identity map, see e.g. \cite[pp. 33-34]{Engl1996}.

In this subsection we will prove the following proposition that, together with Proposition \ref{prop_convergence_to_pt}, will allow us to enforce a sequence of solutions of \eqref{eq_wave} to converge, at a fixed time, to a point mass.

\begin{proposition}
\label{prop_localization}
Let $\mathcal X \subset M$ be open, $x \in \mathcal X$ and let $X_k \subset M$, $k\in\mathbb N$, be a sequence of neighborhoods of $x$ satisfying 
$\underset{k \to \infty}{\lim}\diam(X_k) = 0$. 
Let $\psi_0 \in C_0^\infty(\mathcal X)$ satisfy $\psi_0(x) \ne 0$.
Let $T >0$ and suppose that a sequence $(f_k)_{k\in\mathbb N}$ of functions in $L^2((0,T) \times \Src)$ satisfies
\begin{itemize}
\item[(i)] there is $C > 0$ such that $\norm{f_k}_{L^2((0,T) \times \Src)} \le C |X_k|^{-1/2}$ for all $k\in\mathbb N$,
\item[(ii)] $\supp(u_{f_k}(T,\cdot)) \subset \bar X_k \cup (M \setminus \mathcal X)$
for all $k\in\mathbb N$,
\item[(iii)] $(\bra u_{f_k}(T,\cdot), \psi_0 \cet_{L^2(M)} )_{k\in\mathbb N}$ converges.
\end{itemize}
Then there is $\kappa \in \C$ such that $\bra u_{f_k}(T,\cdot), \psi \cet_{L^2(M)} \to \kappa \psi(x)$ for all functions $\psi \in C_0^\infty(\mathcal X)$.

Furthermore, if the wave equation (\ref{eq_wave}) is exactly controllable from $\Src$ in time $T$,
then the sequence $f_k = \mathbb U_{A,q}^\dagger 1_{X_k} / |X_k|$, $k\in\mathbb N$, satisfies (i)-(iii), and the corresponding $\kappa$ is $1$.
\end{proposition}
Let us emphasize that $\mathcal X$ is open in the topology of $M$, a manifold with boundary. In particular, $\mathcal X$ may intersect $\p M$ in which case $x$ may belong to $\p M$.
\begin{proof}
Let $\psi \in C_0^\infty(\mathcal X)$. Observe that $\supp(u_{f_k}(T,\cdot) \psi) \subset \bar X_k$. Fixing $$R_k(\psi) =\bra u_{f_k}(T,\cdot), \psi\cet_{L^2(M)}-\psi(x)\int_{X_k}\overline{u_{f_k}(T,x)}dV_g(x),$$ 
where $dV_g$ denotes the Riemannian volume, we get
$$
\bra u_{f_k}(T,\cdot), \psi\cet_{L^2(M)} = \psi(x) \bra u_{f_k}(T,\cdot), 1\cet_{L^2(X_k)} + R_k(\psi).
$$

Using some local coordinates $\tilde x$ in $X_k$
for all large enough $k$,
 the remainder term satisfies
\ba
|R_k(\psi)| &\le&  \int_{X_k} |u_{f_k}(T,\tilde{x})| |\psi(\tilde{x})-\psi(x)| dV_g(\tilde x)\\
 &\le& C \norm{d \psi}_{C(X_k)} \int_{X_k} |u_{f_k}(T,\tilde{x})| d(\tilde x,x) dV_g(\tilde x)
\\&\le  & C \norm{d \psi}_{C(X_k)} ||u_{f_k}(T,\cdot)||_{L^2(M)} \left( \int_{X_k} d^2(\tilde x,x) dV_g(\tilde x)\right)^{1/2}
\\&\le  & C \norm{d\psi}_{C(M)} \diam(X_k) \to 0.
\ea
Notice that the constant $C > 0$ may increase between the inequalities and that, at the last inequality, we use $$||u_{f_k}(T,\cdot)||_{L^2(M)} \le C \norm{f_k}_{L^2((0,T) \times \Src)},$$
see \cite{Lasiecka1986}, together with (i).
We choose $\psi = \psi_0$ and see that the limit 
$$
\underset{k \to \infty}{\lim} \bra u_{f_k}(T,\cdot), 1\cet_{L^2(X_k)} 
= 
\frac{1}{\psi_0(x)} 
\underset{k \to \infty}{\lim} (\bra u_{f_k}(T,\cdot), \psi_0\cet_{L^2(M)} - R_k(\psi_0))
$$
exists. We denote the limit by $\kappa$. Thus for any $\psi \in C_0^\infty(\mathcal X)$ it holds that $\bra u_{f_k}(T,\cdot), \psi\cet_{L^2(M)} \to \kappa \psi(x)$ as $k \to \infty$.

Finally, it is clear that $f_k = \mathbb U_{A,q}^\dagger 1_{X_k} / |X_k|$ has the properties (i)-(iii) with $\kappa =1$.
\end{proof}

\subsection{Local recovery near the set $\Rec$}
\def\CC{\mathcal C}

Armed with the localization procedure given by Propositions \ref{prop_convergence_to_pt} and  \ref{prop_localization}, we prove Theorem \ref{th_main} in this section.

From now on, we fix $A_j\in  C^\infty(M;TM)$, $q_j\in\mathcal C^\infty(M;\mathbb C)$, $j=1,2$, and, for functions $f\in C^\infty_0((0,+\infty)\times \p M)$, $\phi\in C^\infty_0((0,+\infty)\times \p M)$ and $H\in C^\infty_0((0,+\infty)\times B)$, we consider $u_{j,f}$, $v_{j,\phi}$, $w_{j,H}$ solving respectively  \eqref{eq_wave}, \eqref{eq_wave_adjoint1}, \eqref{eq_wave_adjoint2} with $A=A_j$ and $q=q_j$.
We write also $$\mathcal A_j = \Delta_g - A_j - q_j,\quad \mathcal A_j^* = \Delta_g + \overline{A_j} - (\overline{q_j}-\textrm{div}_g(\overline{A_j})).$$

Before proving Theorem \ref{th_main} we still need to establish two lemmas.

\begin{lemma}
\label{lem_recovery_Ak}
Let $\Gamma \subset \p M$ and $B \subset M^\inter$ be open.
Let $T >0$ and $h : \bar \Gamma \to [0, T]$ be piecewise continuous.
Let $\CC \subset M(\Gamma, h) \cap M^\inter$ be open and let $\kappa \in C^\infty(\CC)$
be nowhere vanishing.
Then the condition 
\begin{equation}\label{l10a}
 v_{1,\phi}(T,x)=\kappa v_{2,\phi}(T,x), \quad \phi \in C_0^\infty(\B(\Gamma, h; T)),\ x\in\CC,
\end{equation}
implies that $\mathcal A_{1}=\overline{\kappa}^{-1} \mathcal A_2 \overline{\kappa}$ on $\CC$.
In the same way, for $h : \bar \Gamma \to [0, T]$ piecewise continuous,
$\CC \subset M(B, h) \cap M^\inter$  and  $\kappa \in C^\infty(\CC)$
be nowhere vanishing, the condition 
\begin{equation}\label{l10b}
 w_{1,\kappa H}(T,x)=\kappa w_{2,H}(T,x), \quad H \in C_0^\infty(\B(B, h; T)),\ x\in\CC,
\end{equation}
implies that $\mathcal A_{1}=\overline{\kappa}^{-1} \mathcal A_2 \overline{\kappa}$ on $\CC$.
\end{lemma}
\begin{proof} Since the proof of these two results are similar, we will only show that \eqref{l10a} implies $\mathcal A_{1}=\overline{\kappa}^{-1} \mathcal A_2 \overline{\kappa}$ on $\CC$. We start by proving that \eqref{l10a} implies
\begin{equation}\label{l10c}   \mathcal A_1^*v_{1,\phi}(T,\cdot)=\kappa \mathcal A_2^* v_{2,\phi}(T,\cdot), \quad \phi \in C_0^\infty(\B(\Gamma, h; T)).
\end{equation}
For this purpose, we fix $\phi \in C_0^\infty(\B(\Gamma, h; T))$ and remark that there is $\varepsilon>0$ such that supp$(\phi)\subset [\epsilon,+\infty)\times \p M$ and $\phi\in C_0^\infty(\B(\Gamma, h-\epsilon; T))$. Thus, 
taking into account the translation invariance in time of (\ref{eq_wave_adjoint1}) and fixing $\phi_s:(t,x)\mapsto \phi(s+t,x)$, we obtain that $$v_{j,\phi_s}(T,\cdot)=v_{j,\phi}(s+T,\cdot), \quad s\in[0,\epsilon),\ j=1,2$$ and \eqref{l10a} implies
$$ v_{1,\phi}(s+T,\cdot)= \kappa v_{2,\phi}(s+T,\cdot), \quad s\in[0,\epsilon).$$
Differentiating twice this identity with respect to $s$, we get \eqref{l10c}.

Now let $\psi$ be a function in $C_0^\infty(\CC)$. 
Applying again \eqref{l10a},
we can compute 
$$\begin{aligned}
\bra \kappa \mathcal A_2^* v_{2,\phi}(T,\cdot), \psi\cet _{L^2(\CC)}
&= \bra \kappa v_{2,\phi}(T,\cdot), \overline{\kappa}^{-1} \mathcal A_2 \overline{\kappa} \psi\cet_{L^2(\CC)}\\
\ &=\bra  v_{1,\phi}(T,\cdot), \overline{\kappa}^{-1} \mathcal A_2 \overline{\kappa} \psi\cet_{L^2(\CC)}.\end{aligned}
$$
Applying \eqref{l10c}, we get
$$\bra  \mathcal A_1^* v_{1,\phi}(T,\cdot), \psi\cet _{L^2(\CC)}=\bra \kappa \mathcal A_2^* v_{2,\phi}(T,\cdot), \psi\cet _{L^2(\CC)}=\bra  v_{1,\phi}(T,\cdot), \overline{\kappa}^{-1} \mathcal A_2 \overline{\kappa} \psi\cet_{L^2(\CC)}$$
and it follows
\begin{equation}\label{l10e}\bra v_{1,\phi}(T,\cdot), (\mathcal A_1  -\overline{\kappa}^{-1} \mathcal A_2 \overline{\kappa}) \psi\cet_{L^2(\CC)}=0,\quad \psi\in \mathcal C^\infty_0(\CC).\end{equation}
As the functions $v_{1,\phi}(T,\cdot)|_\CC$, $\phi \in C_0^\infty(\B(\Gamma, h; T))$, 
are dense on $L^2(\CC)$, we deduce from \eqref{l10e} that $\mathcal A_1 =\overline{\kappa}^{-1} \mathcal A_2 \overline{\kappa}$ on $\CC$, which completes the proof of the lemma.
\end{proof}

The next lemma will be used only for $j=2$. 

\begin{lemma}
\label{lem_enforce_smoothness}
Let $\Gamma \subset \p M$ and $B \subset M^\inter$ be open, and let $\Rem = \Gamma$ or $\Rem = B$.
Let $T > 0$ and let $h : \bar \Rem \to [0, T]$ be piecewise continuous.
In the case when
$\Rem = B$ suppose, moreover, that $h > 0$ pointwise.
Let $\CC_1 \subset M(\Gamma, h) \cap M^\inter$ and $\CC_2 \subset M(B, h) \cap M^\inter$ be open
and let $\kappa_\ell : \CC_\ell \to \C$. Then the following properties hold:
\begin{itemize}
\item[(1)] For $j=1,2$, if $\kappa_1 v_{j,\phi}(T,\cdot) \in C^\infty(\CC_1)$ for all $\phi \in C_0^\infty(\B(\Gamma, h; T))$
then $\kappa_1 \in C^\infty(\CC_1)$. In the same way, if $\kappa_2 w_{j,H}(T,\cdot)  \in C^\infty(\CC_2)$ for all $H \in C_0^\infty(\B(B, h; T))$ then $\kappa_2 \in C^\infty(\CC_2)$.
\item[(2)] If for all $x \in \CC_1$ there is $\phi \in C_0^\infty(\B(\Gamma, h; T))$ 
such that $$\kappa_1(x) v_{j,\phi}(T,x) \ne 0$$ then $\kappa_1(x) \ne 0$ for all $x \in \CC_1$.  If  for all $x \in \CC_2$ there is $H \in C_0^\infty(\B(B, h; T))$ 
such that $\kappa_2(x) w_{j,H}(T,x) \ne 0$ then $\kappa_2(x) \ne 0$ for all $x \in \CC_2$.
\end{itemize}

Moreover, in the case $\Rem = \Gamma$ we can enforce smoothness up to the boundary, that is, 
we define $\tilde \CC = \CC_1 \cup (S \cap \bar \CC_1)$
where $S$ is an open set in $\p M$ such that $h > 0$
in $S$, and have the following:
\begin{itemize}
\item[(3)] For $j=1,2$, if $\kappa_1 v_{j,\phi}(T,\cdot) \in C^\infty(\tilde\CC)$ for all $\phi \in C_0^\infty(\B(\Gamma, h; T))$
then $\kappa_1 \in C^\infty(\tilde \CC)$.
\end{itemize}
\end{lemma}
\begin{proof} The proof for $\Rem = \Gamma$ or $\Rem = B$ being similar, we consider only the results for $\Rem=\Gamma$.
Let $x \in \CC_1$. By Lemma \ref{lem_non_vanishing}, there is a neighborhood $U$ of $x$
and $\phi \in C_0^\infty(\B(\Gamma, h; T))$
such that $v_{j,\phi}(T,\cdot)$ is non-vanishing in $U$.
We have that $\kappa_1 = \frac{\kappa_1v_{j,\phi}(T,\cdot)}{v_{j,\phi}(T,\cdot)} $ in $U$,
and (1) and (2) follow.


Suppose now that  $x \in S \cap \bar \CC_1$.
Then there is a neighborhood of $U \subset M$ of $x$
and $\phi \in C_0^\infty(\B(\Gamma, h; T))$
such that $v_{j,\phi}(T,\cdot)$ is non-vanishing in $U$,
since on $S$ we can choose $v_{j,\phi}(T,\cdot) = \phi(T,\cdot)$ to be non-vanishing. 
The function $v_{j,\phi}(T,\cdot)$ is smooth up to $\p M$, 
whence $\kappa_1$ is smooth in $U$.
\end{proof}

We are ready to prove the local result formulated in the introduction.

\begin{proof}[Proof of Theorem \ref{th_main}]

As the wave equation is exactly controllable from $\Src$ in time $T$,
\cite[Theorem 3.2]{Bardos1992}
implies that $\sigma_\Rec \le T$ pointwise on $\Rec$.
We recall that $\sigma_\Rec$ is defined by (\ref{def_sigma}). For $j=1,2$ and $\kappa\in C^\infty(\overline{\Omega_s})$, we fix $K_{j,\Rec}^{T}$ given by \eqref{def_K_Gamma} for $\Lambda_{\Gamma}=\Lambda^{2T}_{j,S,\Rec}$ and applying \eqref{t1a} we deduce that
\begin{equation}\label{th1a} K_{1,\Rec}^{T}=K_{2,\Rec}^{T}.\end{equation}
Let $\mathcal K \subset \Rec$ be compact,
and consider the sets defined in Proposition \ref{prop_convergence_to_pt} for $\Gamma=\Rec$, $U\subset M_\Rec$ and $p=(s,y)\in U$.
We write $C_p(\mathcal K) = C_p$ to emphasize the dependence on $\mathcal K$, and use an analogous notation also for other quantities in Proposition \ref{prop_convergence_to_pt}.
 We will start by proving that there exists $\kappa\in  C^\infty(C_p(\mathcal K))$ such that the following identity holds
\begin{equation}\label{th1}  v_{1,\phi}(T,x)=\kappa(x) v_{2,\phi}(T,x),\end{equation}
for $\phi\in C^\infty_0(\mathcal B(B_{\partial M}(y,\epsilon), s+\epsilon;T ))$ and $x\in C_p(\mathcal K)$.
For any $x \in C_p(\K)$, we consider functions $h_{k,x}$ and sets $X_{k,x}$ satisfying the properties described in Proposition \ref{prop_convergence_to_pt}.
We will apply the result of Proposition \ref{prop_localization}, with $\mathcal X = M(B_{\partial M}(y,\epsilon), s+\epsilon)^\inter$ and $X_k=X_{k,x}$. Using the exact controllability assumption, we fix 
    \begin{equation}\label{f_pseudoinv}
f_{k,x} = \mathbb U_{A_1,q_1}^\dagger 1_{X_{k,x}} / |X_{k,x}|,\quad k\in\mathbb N.
    \end{equation}
We remark that
\begin{equation}\label{th2} \lim_{k\to+\infty} \left\langle u_{1,f_{k,x}}(T,\cdot), \psi \right\rangle_{L^2(M)}=\psi(x),\quad \psi\in C^\infty_0(\mathcal X ).\end{equation}
Let us now show that the conditions (i)-(iii) of Proposition \ref{prop_localization} are fulfilled with respect to $j=2$.
Clearly (i) holds, as it does not depend on $j=1,2$. 
The equations \eqref{blago_inner_prod1} and \eqref{th1a}
imply that 
\begin{equation}\label{th2a} \begin{aligned}\left\langle u_{1,f_{k,x}}(T,\cdot), v_{1,\phi}(T,\cdot) \right\rangle_{L^2(M)}&=\bra K_{1,\Rec}^{T} f_{k,x},\phi \cet_{L^2((0,T) \times \Rec)}\\
\ &=\bra K_{2,\Rec}^{T} f_{k,x},\phi \cet_{L^2((0,T) \times \Rec)}\\
\ &=\left\langle u_{2,f_{k,x}}(T,\cdot), v_{2,\phi}(T,\cdot) \right\rangle_{L^2(M)}.\end{aligned}\end{equation}
Finite speed of propagation implies that $\supp(v_{1, \phi}(T,\cdot)) \subset M(\Gamma, h_{k, x})$ for all
$\phi \in C^{\infty}_0 (\B(\Rec, h_{k,x}; T))$.
Observe that $u_{1,f_{k,x}}(T,\cdot) = 1_{X_{k,x}} / |X_{k,x}|$ by (\ref{f_pseudoinv}), and recall that by definition $\mathcal X = M(B_{\partial M}(y,\epsilon), s+\epsilon)^\inter$ and $X_{k,x} = \mathcal X \setminus M(\Gamma, h_{k, x})$, see Proposition \ref{prop_convergence_to_pt} for the latter.
Hence
$$\left\langle u_{2,f_{k,x}}(T,\cdot), v_{2,\phi}(T,\cdot) \right\rangle_{L^2(M)}=\left\langle u_{1,f_{k,x}}(T,\cdot), v_{1,\phi}(T,\cdot) \right\rangle_{L^2(M)}=0$$
and the density result of Lemma \ref{Lem. Tataru} implies that 
$u_{2,f_{k,x}}(T,\cdot) = 0$ in $M(\Gamma, h_{k, x})$.
Therefore 
$$
\supp(u_{2,f_{k,x}}(T,\cdot)) \subset 
M \setminus M(\Gamma, h_{k, x}) \subset X_{k,x} \cup (M \setminus \mathcal X),
$$
and (ii) holds.
Moreover, equations \eqref{th2}-\eqref{th2a} imply that the sequence $$(\left\langle u_{2,f_{k,x}}(T,\cdot), v_{2,\phi}(T,\cdot) \right\rangle_{L^2(M)})_{k\in\mathbb N}$$ converges
for any $\phi\in C^\infty_0(\B(B_{\partial M}(y,\epsilon), s+\epsilon;T ))$. Thus, condition (iii) of Proposition \ref{prop_localization} is also fulfilled. Note that by Lemma \ref{lem_non_vanishing}, the function $\phi$ can be chosen so that $v_{2,\phi}(T,x) \ne 0$. According to Proposition \ref{prop_localization} there exists $\kappa_x$ such that
\begin{equation}\label{th2b}\lim_{k\to+\infty}\left\langle u_{2,f_{k,x}}(T,\cdot), \psi \right\rangle_{L^2(M)}=\kappa_x\psi(x),\quad \psi\in C^\infty_0(\mathcal X).\end{equation}

We define a function $\kappa: C_p(\K) \to \C$ 
by $\kappa(x) = \kappa_x$, and remark that applying \eqref{th2} with $\psi=v_{1,\phi}(T,\cdot)$ for $\phi\in C^\infty_0((B_{\partial M}(y,\epsilon), s+\epsilon,T ))$, and using \eqref{th2a}, we have
\begin{equation}\label{tata}
v_{1,\phi}(T,x) = \lim_{k\to+\infty}\left\langle u_{2,f_{k,x}}(T,\cdot), v_{2,\phi}(T,\cdot) \right\rangle_{L^2(M)}=\kappa(x) v_{2,\phi}(T,x).
\end{equation}
This establishes \eqref{th1}. Moreover,  combining \eqref{th1} with the fact that $v_{1,\phi}(T,\cdot)\in C^\infty(M)$, for $\phi\in C^\infty_0(\mathcal B(B_{\partial M}(y,\epsilon), s+\epsilon,T ))$, and applying  Lemma \ref{lem_enforce_smoothness}, we deduce that $\kappa\in C^\infty(C_p(\mathcal K)\cup(\overline{C_p(\mathcal K)}\cap \Rec) )$. Finally, applying Lemma \ref{lem_non_vanishing} we deduce that for all $x\in C_p(\mathcal K)$ there exists $\phi_x$ such that $v_{\phi_x}(T,x)\neq0$. Thus, \eqref{tata} and Lemma \ref{lem_enforce_smoothness} imply $\kappa$ is nowhere vanishing in $C_p(\mathcal K)$.

We consider now  a collection $\{\K_i;\ i\in I\}$ of compact sets in $\Rec$ and $p_i=(s_i,y_i)$ lying in a neighborhood of $\K_i$ as in Proposition \ref{prop_convergence_to_pt}. We assume that $\{\K_i;\ i\in I\}$ and  $\{p_i;\ i\in I\}$ are chosen in such a way that $\bigcup_{i\in I} C_{p_i}(\K_i) $ is a neighborhood of $\Rec$. Repeating the above argumentation, for all $i\in I$,  we find $\kappa_i\in C^\infty(C_{p_i}(\K_i)\cup(\overline{C_{p_i}(\mathcal K_i)}\cap \Rec))$ such that, for all $x\in C_{p_i}(\K_i)$, we have
\begin{equation}\label{th2c} v_{1,\phi}(T,x)=\kappa_i(x) v_{2,\phi}(T,x),\quad \phi\in C^\infty_0(B_{\partial M}(y_i,\epsilon_i), s_i+\epsilon_i,T )).\end{equation}
Now let $i_1,i_2\in I$ be such that $C_{p_{i_1}}(\K_{i_1})\cap C_{p_{i_2}}(\K_{i_2})\neq\emptyset$. Since both $C_{p_{i_1}}(\K_{i_1})$ and $ C_{p_{i_2}}(\K_{i_2})$ are cylindrical domains, $C_{p_{i_1}}(\K_{i_1})\cap C_{p_{i_2}}(\K_{i_2})$ is also cylindrical and we have $B_{\partial M}(y_{i_1},\epsilon_{i_1})\cap B_{\partial M}(y_{i_2},\epsilon_{i_2})\neq\emptyset$. Combining this with \eqref{th2c},  we obtain for any $$\phi\in C^\infty_0(\B(B_{\partial M}(y_{i_1},\epsilon_{i_1})\cap B_{\partial M}(y_{i_2},\epsilon_{i_2}), \max(s_{i_1}+\epsilon_{i_1},s_{i_2}+\epsilon_{i_2}),T ))$$
and any $x\in C_{p_{i_1}}(\K_{i_1})\cap C_{p_{i_2}}(\K_{i_2})$, the equation
\begin{equation}\label{th2d} \kappa_{i_2}(x) v_{2,\phi}(T,x)=v_{1,\phi}(T,x)=\kappa_{i_1}(x) v_{2,\phi}(T,x).\end{equation}
In view of Lemma \ref{lem_non_vanishing},  for any $x\in C_{p_{i_1}}(\K_{i_1})\cap C_{p_{i_2}}(\K_{i_2})$, we can choose $\phi$ such that $v_{2,\phi}(T,x)\neq0$. Combining this with \eqref{th2d}, we have
\begin{equation}\label{th2e} \kappa_{i_1}(x)=\kappa_{i_2}(x) ,\quad x\in C_{p_{i_1}}(\K_{i_1})\cap C_{p_{i_2}}(\K_{i_2}).\end{equation}
Therefore, we can define $\kappa\in C^\infty\left(\bigcup_{i\in I} C_{p_i}(\K_i)\right)$ such that, for all $i\in I$, $\kappa_{|C_{p_i}(\K_i)}=\kappa_i$. In light of \eqref{th2c}, we deduce that $\kappa_{|\Rec}=1$ and  Lemma \ref{lem_enforce_smoothness} implies that $\kappa\neq0$. Moreover,  applying Lemma \ref{lem_recovery_Ak} on $C_{p_i}(\K_i)$, for any $i\in I$, we deduce that 
$$\mathcal A_{1}=\kappa^{-1} \mathcal A_2 \kappa=\Delta_g - (A_2 +2\kappa^{-1}\textrm{grad}_g\kappa) - (q_2+\kappa(A_2-\Delta_g)\kappa^{-1})$$
holds true on $\bigcup_{i\in I} C_{p_i}(\K_i) $. Therefore, we can define $\mathcal U$ a neighborhood of $\Rec$, contained into $\bigcup_{i\in I} C_{p_i}(\K_i) $, such that  $\kappa\in C^\infty(\overline{\mathcal U})$ and  $(A_1|_{\mathcal U},q_1|_{\mathcal U})\in \mathcal G_{\mathcal U, \Rec}(A_2,q_2)$. This completes the proof of the theorem.
\end{proof}

\section{Reconstruction 
 of the first order perturbation
 along a convex foliation}
\label{sec_global}

In this section we prove the global result stated in Theorem \ref{th_main_foliation}.
The proof of this result is based on iterating the local reconstruction method of the previous section along the convex foliation. 

\subsection{Local recovery near the set $\Sigma_s$}

Let $\Sigma_s$, $s \in (0, 1]$, be a convex foliation satisfying (F1)-(F7).
Let $\Gamma \subset \Sigma_s$ be open and let $h : \bar \Gamma \to \R$ 
be piecewise continuous. We recall that $M_s$ is 
 defined in (F4),
and consider the domain of influence on $M_s$,
\ba
& &M_s(\Gamma, h)  := \{x \in M_s;\ \inf_{y \in \Gamma} (d_{M_s}(x, y) - h(y)) \le 0\}.
\ea
Here $d_{M_s}(x, y)$ is the distance function on $(M_s, g)$.
We will also use the notation $d_{\overline \Omega_s}(x,y)$ for the distance function on $(\overline \Omega_s, g)$.

\begin{lemma}
\label{lem_Ms}
Let $\Sigma_s$, $s \in (0, 1]$, be a convex foliation satisfying (F1)-(F7),
and let $s \in (0, 1]$. Let $h : \bar \Sigma_s \to \R$ 
be piecewise continuous.
Then
\begin{equation}
\label{eq_Ms}
M_s(\Sigma_s, h)\cup \overline \Omega_s=M (\Omega_s, \tilde h),
\end{equation}
where $\tilde h(y)=\max(\sup_{z\in \Sigma_s}(h(z)-d_{\overline \Omega_s}(z,y)),
d_{\overline \Omega_s}(y,\p \Omega_s))
$.	
\end{lemma}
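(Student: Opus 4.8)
The plan is to compute the right-hand side $M(\Omega_s,\tilde h)$ directly and show that it splits into the two geometric pieces $M_s(\Sigma_s,h)$ and $\overline\Omega_s$ appearing on the left, the splitting being dictated by the two arguments of the $\max$ defining $\tilde h$. Throughout I use the definition of $M(\cdot,\cdot)$ through the closed condition $\inf_y(d(x,y)-\cdot(y))\le 0$, which keeps all the sets closed and makes the manipulation of infima legitimate.

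First I would record that a domain of influence whose weight is a pointwise maximum is the union of the two domains of influence. Writing $g_1(y)=\sup_{z\in\Sigma_s}(h(z)-d_{\overline\Omega_s}(z,y))$ and $g_2(y)=d_{\overline\Omega_s}(y,\partial\Omega_s)$, so that $\tilde h=\max(g_1,g_2)$, one has $d(x,y)-\tilde h(y)=\min(d(x,y)-g_1(y),\,d(x,y)-g_2(y))$, and taking $\inf_{y\in\Omega_s}$ gives
\[
M(\Omega_s,\tilde h)=M(\Omega_s,g_1)\cup M(\Omega_s,g_2).
\]

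Then I would identify the two pieces. For $g_2$: for $y\in\Omega_s$ a path realizing $d(y,\partial\Omega_s)$ stays in $\overline\Omega_s$ until it first meets $\partial\Omega_s$, so $d(y,\partial\Omega_s)=d_{\overline\Omega_s}(y,\partial\Omega_s)=g_2(y)$; choosing $y=x$ shows $\Omega_s\subseteq M(\Omega_s,g_2)$, hence $\overline\Omega_s\subseteq M(\Omega_s,g_2)$ by closedness, while for $x\notin\overline\Omega_s$ every geodesic from $y$ to $x$ crosses $\partial\Omega_s$, giving $d(x,y)\ge d(x,\partial\Omega_s)+g_2(y)\ge \delta+g_2(y)$ with $\delta=d(x,\partial\Omega_s)>0$, so $x\notin M(\Omega_s,g_2)$; thus $M(\Omega_s,g_2)=\overline\Omega_s$. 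For $g_1$: exchanging the two infima and using the triangle inequality bound $\inf_{y\in\Omega_s}(d(x,y)+d_{\overline\Omega_s}(z,y))=d(x,z)$ (lower bound from $d_{\overline\Omega_s}\ge d$ and the triangle inequality, upper bound by letting $y\to z$ inside $\Omega_s$), I obtain for every $x$
\[
\inf_{y\in\Omega_s}(d(x,y)-g_1(y))=\inf_{z\in\Sigma_s}(d(x,z)-h(z)),
\]
so $M(\Omega_s,g_1)=N:=\{x\in M;\ \inf_{z\in\Sigma_s}(d(x,z)-h(z))\le 0\}$, the domain of influence of $\Sigma_s$ computed with the full distance $d$ of $M$. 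Combining, $M(\Omega_s,\tilde h)=N\cup\overline\Omega_s$, and since $M_s(\Sigma_s,h)\subseteq M_s$ and $M_s\cap\overline\Omega_s=\Sigma_s$, the lemma reduces to the claim $N\cup\overline\Omega_s=M_s(\Sigma_s,h)\cup\overline\Omega_s$, i.e. to $x\in N\iff x\in M_s(\Sigma_s,h)$ for every $x\in M\setminus\overline\Omega_s\subseteq M_s$.

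This last step is where all the geometry sits and is the main obstacle. Since $d(x,z)\le d_{M_s}(x,z)$ always, the inclusion $M_s(\Sigma_s,h)\subseteq N$ off $\overline\Omega_s$ is free; the reverse requires the convexity identity
\[
d(x,z)=d_{M_s}(x,z),\quad x\in M\setminus\overline\Omega_s,\ z\in\Sigma_s,
\]
which I would prove by showing that an $M$-minimizing geodesic $\gamma$ from such an $x$ to $z$ stays in $\overline{M_s}$. The only interior part of $\partial\Omega_s$ is $\Sigma_s$, so any excursion of $\gamma$ into $\Omega_s$ must enter and leave through $\Sigma_s$; strict convexity of $\Sigma_s$ as a subset of $\partial M_s$, quantified by the second fundamental form estimate used in Lemma \ref{lem_convexity}, forbids a minimizing geodesic from crossing $\Sigma_s$ into $\Omega_s$, since such an excursion could be strictly shortened by a nearby path on the $M_s$ side, while tangential contacts are excluded by strictness. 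Given the identity, the two infima defining $N$ and $M_s(\Sigma_s,h)$ coincide on $M\setminus\overline\Omega_s$, which finishes the proof. The delicate points I expect to have to handle with care are the exclusion of excursions that reach the outer boundary (using that $\Rec_s\subset\Rec$ is itself strictly convex, so $\gamma$ cannot run along $\partial M$) and the routine verification that all the sets involved are closed, so that the equalities of $\{\inf\le 0\}$-sets are genuine set equalities.
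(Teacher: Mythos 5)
Your reduction of the lemma is sound and, modulo organization, equivalent to the paper's proof: both come down to the key identity $d(x,z)=d_{M_s}(x,z)$ on $M_s$, plus triangle-inequality manipulations. Your splitting $M(\Omega_s,\tilde h)=M(\Omega_s,g_1)\cup M(\Omega_s,g_2)$, the identifications $M(\Omega_s,g_2)=\overline\Omega_s$ and $M(\Omega_s,g_1)=N$, and the exchange of infima are all correct; the paper instead proves the two inclusions of (\ref{eq_Ms}) directly, but the content of that part is the same.

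The genuine gap is in your proof of the key identity. You invoke only the strict convexity of the single leaf $\Sigma_s$, claiming that a minimizing path cannot cross $\Sigma_s$ transversally into $\Omega_s$ because ``such an excursion could be strictly shortened by a nearby path on the $M_s$ side''. That local-shortening principle is false: strict convexity of $\Sigma_s$ as seen from $M_s$ rules out \emph{tangential} contact of an $M$-geodesic with $\Sigma_s$ from the $M_s$ side, but it gives no control over excursions that enter $\Omega_s$ transversally and travel deep inside it. For instance, keep the metric unchanged in a neighborhood of $\Sigma_s\cup\p M$ but shrink it conformally in a pocket well inside $\Omega_s$: then $\Sigma_s$ and $\Rec$ are still strictly convex, yet shortest paths between points of $M_s$ cut through $\Omega_s$, entering and leaving transversally, so $d<d_{M_s}$. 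Such a metric admits no foliation satisfying (F1)--(F7), and that is exactly the point: the hypothesis your argument never uses --- the inner leaves $\Sigma_r$, $r<s$, together with $\Rec_0$ --- is indispensable. The paper's argument is: if a shortest path $\gamma$ between points of $M_s$ enters $\Omega_s$, set $S=\inf\{r\in[0,s];\ \gamma\cap\Sigma_r\ne\emptyset\}$, with $\Sigma_0:=\Rec_0$. At the deepest leaf $\Sigma_S$ the contact of $\gamma$ \emph{is} tangential, because $\gamma\cap\Omega_S=\emptyset$ by the definition of $S$; if $S>0$, then $\gamma$ is a genuine geodesic near the contact point (which lies in $M^\inter$) and strict convexity of $\Sigma_S$ forces $\gamma$ into $\Omega_S$, a contradiction, while if $S=0$ the $C^1$-regularity of a shortest path forces tangency with $\p M$ at a point of $\Rec_0\subset\Rec$, which contradicts the strict convexity of $\Rec$. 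If you replace your local-shortening claim by this ``deepest leaf'' argument, the rest of your proof goes through as written.
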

\begin{proof}
Let us show first that 
$$
d(x,z) = d_{M_s}(x,z), \quad x,z \in M_s.
$$
It is enough to show that a shortest path $\gamma$ between $x$ and $z$ stays in $M_s$.
To get a contradiction suppose that $S < s$, where
$$
S = \inf\{r \in [0,s];\ \gamma \cap \Sigma_r \ne \emptyset \},
$$
and we have used the notation $\Sigma_0 = \Rec_0$.
Let  $p \in \gamma \cap \Sigma_S$.
Let us consider first the case $S > 0$.
Then $\gamma$ is a geodesic near $p$.
As $\gamma \cap \Omega_S = \emptyset$, the intersection is tangential.
But then the strict convexity of $\Sigma_S$ implies that $\gamma$ is in $\Omega_S$ near $p$,
which is a contradiction. 
On the other hand, if $S = 0$ then the intersection must be tangential again, 
since a shortest path is $C^1$, see \cite{AA}.
But this is impossible by the strict convexity of $\Sigma_0 \subset \Rec$. 

Let us now show (\ref{eq_Ms}).
Note that $\tilde h(y) \ge h(y)$ for $y \in \Sigma_s$
and that $\tilde h > 0$ on $\Omega_s$.
Hence $M_s(\Sigma_s, h)\cup \overline \Omega_s \subset M (\Omega_s, \tilde h)$. 
On the other hand, if $x \in M(\Omega_s, \tilde h) \setminus \bar \Omega_s$ then 
there is $y \in \bar \Omega_s$ such that $d(x,y) - \tilde h(y) \le 0$
and $z \in \bar \Sigma_s$ such that $\tilde h(y) = h(z) - d_{\overline \Omega_s}(z,y)$.
Thus 
$$
d_{M_s}(x,z) - h(z) = d(x,z) - d_{\overline \Omega_s}(z,y) - \tilde h(y)
\le d(x,y) - \tilde h(y) \le 0,
$$
and $x \in M_s(\Sigma_s, h)$.
\end{proof}

Let us prove next the following analogue of Theorem \ref{th_main} 
with internal data on $\Omega_s$. 
Note that contrary to Theorem \ref{th_main}, we do not require $\kappa$ to have a specific value on $\Sigma_s$.
We recall that for and open set $U \subset M^\inter$
and $f \in C_0^{\infty}((0, \infty) \times \Src)$,
$$\mathcal T_{j,U, \kappa} f = \kappa u_j|_{(0,\infty) \times U},$$
where $u_j$ is the solution of (\ref{eq_wave}) for $A=A_j$ and $q=q_j$.

\begin{lemma}
\label{lem_lot_Us}
Let $\Src \subset \p M$ be open and suppose that the wave equation (\ref{eq_wave}) is exactly controllable from $\Src$ in time $T > 0$.
Let $\Sigma_s$, $s \in (0, 1]$, be a convex foliation satisfying (F1)-(F7),
let $s \in (0,1]$, and let $\kappa_0 \in C^\infty(\overline{\Omega_s})$
be nowhere vanishing.
Then there is a neighborhood $\mathcal U_s \subset M_s$ of $\Sigma_s$ 
such that the condition
\begin{equation}
\label{llu1}\mathcal T_{1,\Omega_s, 1}=\mathcal T_{2,\Omega_s, \kappa_0}\end{equation}
implies that there exists $\kappa \in C^\infty(\mathcal U_s)$ such that 
$$\kappa(x) \ne 0, x \in \mathcal U_s$$
and 
\begin{equation}
\label{llu2}
\mathcal A_1|_{\mathcal U_s} =\kappa^{-1} \mathcal A_2\kappa|_{\mathcal U_s}.
\end{equation}

\end{lemma}  
\begin{proof}


For $j=1,2$, we fix $K_{j,\Omega_s,\kappa}$ given by \eqref{def_K_B} for $B=\Omega_s$  and $\mathcal T_{\Omega_s,\kappa}=\mathcal T_{j,\Omega_s,\kappa}$ and applying \eqref{llu1} we deduce that
\begin{equation}\label{tt1a} K_{1,\Omega_s,1}^{T}=K_{2, \Omega_s,\kappa_0}^{T}.\end{equation}
Let $\mathcal K \subset \Sigma_s$ be compact,
and consider the sets defined in Proposition \ref{prop_convergence_to_pt} with $M$ replaced by $M_s$, $\Gamma$ replaced by $\Sigma_s$. We fix $U(\K)$ the neighborhood of $\K$ in $M_s$ satisfying the properties of  Proposition \ref{prop_convergence_to_pt}. For all $p=(s,y)\in U(\K)$, we define $\B_p(\K) = \B(\Omega_s, \tilde h, T)$ (see the beginning of Section 2.2 for the definition of this set),
where $\tilde h$ is as in Lemma \ref{lem_Ms}
with the choice $h = (s + \epsilon)1_{\Gamma_p(\K)}$, and  $\epsilon$ is as in Proposition \ref{prop_convergence_to_pt}.
For all $H\in C^\infty_0(\B_p(\K))$ and $j=1,2$, we denote by $w_{j,H}$ the solution of \eqref{eq_wave_adjoint2} with $A=A_j$, $q=q_j$.
 We will start by proving that, for all $p=(s,y)\in U(\K) $ there exists $\kappa\in  C^\infty(C_p(\mathcal K))$ such that the following identity holds
\begin{equation}\label{tt1}  w_{1, H}(T,x)=\kappa(x) w_{2,H}(T,x),\end{equation}
for $H\in C^\infty_0(\B_p(\K))$  and $x\in C_p(\mathcal K)$.
For any $x \in C_p(\K)$, we consider functions $h_{k,x}$ and sets $X_{k,x}$ satisfying the properties described in Proposition \ref{prop_convergence_to_pt}   with $\mathcal X = M_s(\Omega_s, \tilde h)^\inter$.
 Analogously to the proof of Theorem \ref{th_main}, we use Lemma \ref{Lem. Tataru} together with Lemma \ref{lem_Ms}, \eqref{blago_inner_prod2} and Proposition \ref{prop_localization}, with  $X_k=X_{k,x}$, to define  $f_{k,x}\in \mathcal C^\infty_0((0,+\infty)\times S$, $k\in\mathbb N$, 
such that
$$\begin{aligned}w_{1, H}(T,x)&=\lim_{k\to+\infty} \left\langle u_{1,f_{k,x}}(T,\cdot), w_{1, H}(T,\cdot)\right\rangle_{L^2(M)}\\
\ &=\lim_{k\to+\infty}\left\langle u_{2,f_{k,x}}(T,\cdot), w_{2,H}(T,\cdot) \right\rangle_{L^2(M)}=\kappa_xw_{2,H}(T,x).\end{aligned}$$
We introduce the function $\kappa: C_p(\K) \to \C$ 
by $\kappa(x) = \kappa_x$, and we get \eqref{tt1}. Moreover,   applying  Lemma \ref{lem_enforce_smoothness}, we deduce that $\kappa$ is smooth and nowhere vanishing in  $C_p(\mathcal K)\cup(\overline{C_p(\mathcal K)}\cap \Sigma_s)$.

We consider now  a collection $\{\K_i;\ i\in I\}$ of compact sets in $\Sigma_s$ and $p_i=(s_i,y_i)$ lying in a neighborhood of $\K_i$ as in Proposition \ref{prop_convergence_to_pt}. We assume that $\{\K_i;\ i\in I\}$ and  $\{p_i;\ i\in I\}$ are chosen in such a way that $\bigcup_{i\in I} C_{p_i}(\K_i) $ is a neighborhood of $\Sigma_s$. Repeating the above argumentation, for all $i\in I$,  we find $\kappa_i\in C^\infty(C_{p_i}(\K_i)\cup(\overline{C_{p_i}(\mathcal K_i)}\cap \Rec))$ such that, for all $x\in C_{p_i}(\K_i)$, we have
\begin{equation}\label{tt2c} w_{1,H}(T,x)=\kappa_i(x) w_{2,H}(T,x),\quad H\in C^\infty_0(\B_{p_i}(\K_i)).\end{equation}
In a similar way to the end of the proof of Theorem \ref{th_main}, applying Lemma \ref{lem_non_vanishing},  we can define $\kappa\in C^\infty\left(\bigcup_{i\in I} C_{p_i}(\K_i)\right)$ such that, for all $i\in I$, $\kappa_{|C_{p_i}(\K_i)}=\overline{\kappa_i}$. Combining this with \eqref{tt2c}, we can define $\mathcal U_s$ a neighborhood of $\Sigma_s$, such that \eqref{llu2} is fulfilled.
\end{proof}

\subsection{Gluing of the gauges}

Let $\Src, \Rec \subset \p M$ satisfy the assumptions of Theorem \ref{th_main_foliation},
and let $\Sigma_s$, $s \in (0,1]$, be a convex foliation satisfying (F1)-(F7). From now on, we assume that \eqref{t1a} is fulfilled and our goal is to prove \eqref{t2a}.
For this purpose, we define the set
\begin{eqnarray}
\label{connected_set_J}
J = \{s \in (0,1];&\text{there exists $U_s \subset M$ an open set of $M$} 
\\\notag&\text{containing $\bar \Omega_s$ such that}
\\\notag&\text{$(A_1|_{U_s},q_1|_{U_s})\in\mathcal G_{U_s, \Rec}(A_2, q_2)$}\}
\end{eqnarray}
According to Theorem \ref{th_main} and condition (F6), we know that $J\neq\emptyset$ since for  $s$ small enough   we have that 
$\overline{\Omega_s} \subset \mathcal U$, where $\mathcal U$ is a neighborhood of $\Rec$ as in Theorem \ref{th_main}.
Moreover, the continuity condition (F5) implies that $J$ is open.  Therefore, since $(0,1]$ is a connected set, the proof of 
Theorem \ref{th_main_foliation} will be completed if we show that $J$ is closed. This will be our main task from now on. We start with four intermediate results.



Let $U \subset M^\inter$ be open. We define
\def\KK{\mathbb K}
$$
\KK(U) := \{\kappa \in C^\infty(\bar U);\ \kappa|_{\bar U \cap \Rec} = 1,\ \kappa(x) \ne 0,\ x \in \bar U \}.
$$

\begin{lemma}
\label{lem_moving_data}
Let $U \subset M^\inter$ be open and connected and suppose that $\bar U \cap \p M \subset \Rec$ and that the interior of $\bar U \cap \Rec$ 
in $\p M$ is nonempty. Assume that there exists a piecewise smooth function $\kappa_0 : U \to \C$ with the following properties:
\begin{itemize}
\item[(i)] $\kappa_0(x) \ne 0$ for all $x \in U$,
\item[(ii)] there is a neighbourhood $W \subset M$ of $\Rec$
such that $\kappa_0$ is smooth in $U \cap W$
and extends smoothly to $\overline U \cap \Rec$,
\item[(iii)] the smooth extension satisfies $\kappa_0 = 1$ in $\bar U \cap \Rec$.
\end{itemize}
Suppose that, for $\mathcal A_j=\Delta_g-A_j-q_j$, $j=1,2$,  the conditions (\ref{t1a}) and 
\begin{equation}\label{lmda}\mathcal A_1|_U=\kappa_0^{-1}\mathcal A_2\kappa_0|_U\end{equation}
are fulfilled. Then $\kappa_0$ is smooth and has a smooth extension $\kappa$ to $\overline U$.
The smooth extension satisfies $\kappa \in \KK(U) $ and the condition
\begin{equation}\label{lmdb_mod}
\mathcal T_{1,U,1}=\mathcal T_{2,U,\kappa}\end{equation}
is fulfilled.
\end{lemma}
\begin{proof}
We will divide the proof in four steps. 

Step 1. We will show that (\ref{lmdb_mod})
holds with $U$ replaced by a small subset of $U$ lying close to $\Rec$. 
As the interior of $\bar U \cap \Rec$ 
in $\p M$ is nonempty, we can choose a nonempty open set $\Gamma \subset \bar U \cap \Rec$ and $r > 0$ such that 
$M(\Gamma, r) \subset U$. For $j=1,2$, $f\in C_0^\infty((0,+\infty) \times S)$ and $\phi \in C_0^\infty((T-r,T) \times \Gamma)$  we consider $u_{j,f}$, $v_{j,\phi}$ solving respectively  \eqref{eq_wave}, \eqref{eq_wave_adjoint1} with $A=A_j$ and $q=q_j$. By the finite speed of propagation (e.g. \cite[Lemma 3.9]{Katchalov2001}), we know that, for all $t\in[0,T]$,  supp$(v_{2,\phi}(t,\cdot))\subset M(\Gamma, r)$. Then, \eqref{lmda} and the uniqueness of solutions of \eqref{eq_wave_adjoint1} imply \begin{equation}\label{lmdc}v_{1,\phi}=(\overline{\kappa_0})^{-1}v_{2,\phi}.\end{equation} 
 By Lemma \ref{lem_blago}, the condition \eqref{t1a} implies that
$$
\bra v_{1,\phi}(T,\cdot),u_{1,f}(T,\cdot)\cet_{L^2(M)} 
= \bra v_{2,\phi}(T,\cdot),u_{2,f}(T,\cdot)\cet_{L^2(M)} 
$$
and combining this with \eqref{lmdc} we get
$$
\bra v_{1,\phi}(T,\cdot),u_{1,f}(T,\cdot)-\kappa_0u_{2,f}(T,\cdot)\cet_{L^2(M)} =0.
$$
Then, using the density of the functions $v_{1,\phi}(T,\cdot)$, $\phi \in C_0^\infty((T-r,T) \times \Gamma)$, in $L^2(M(\Gamma, r))$, given by Lemma \ref{Lem. Tataru},  we find for $x \in M(\Gamma, r)$ that 
    \begin{equation}\label{u1kappau2}
u_{1,f}(T,x)=\kappa_0u_{2,f}(T,x).    
    \end{equation}
Now allowing $T>0$ to be arbitrary, we deduce that
\begin{equation}\label{lmdd}\mathcal T_{1,B,1}=\mathcal T_{2,B,\kappa_0},\end{equation}
where $B = M(\Gamma, r)^\inter$.

Step 2. 
Supposing that $p \in U$ and $\epsilon > 0$ satisfy $B(p, 2\epsilon) \subset U$,
we will show that 
\begin{equation}\label{lmde}\mathcal T_{1,B(p,\epsilon),1}=\mathcal T_{2,B(p,\epsilon),\kappa_0},\end{equation}
implies
\begin{equation}\label{lmdf}\mathcal T_{1,B(p,2\epsilon),1}=\mathcal T_{2,B(p,2\epsilon),\kappa_0}.\end{equation}

For $H\in C^\infty_0((0,+\infty)\times  B(p,\epsilon))$, we consider  $w_{j,H}$ solving  \eqref{eq_wave_adjoint2} with $A=A_j$ and $q=q_j$. Applying Lemma \ref{lem_blago} and \eqref{lmde}, for $ H \in C_0^\infty((0,\infty) \times B(p,\epsilon)),\ 
f \in C_0^\infty((0,\infty) \times \Src)$, we get
$$
\bra w_{1, H}(T,\cdot),u_{1,f}(T,\cdot)\cet_{L^2(M)} 
= \bra w_{2,\overline{\kappa_0} H}(T,\cdot),u_{2,f}(T,\cdot)\cet_{L^2(M)} .
$$
On the other hand, for $H \in C_0^\infty((T-\epsilon, T) \times B(p,\epsilon))$, the finite speed of propagation and \eqref{lmda} imply $w_{2,\overline{\kappa_0} H}(T,\cdot)=\overline{\kappa_0} w_{1, H}(T,\cdot)$.
Then we have
$$
\bra w_{1, H}(T,\cdot),u_{1,f}(T,\cdot)-\kappa_0 u_{2,f}(T,\cdot)\cet_{L^2(M)} =0
$$
and using the density of  the functions $w_{1, H}(T,\cdot)$, $H \in C_0^\infty((T-\epsilon, T) \times B(p,\epsilon))$, in $L^2(M(B(p,\epsilon), \epsilon))$, given by Lemma \ref{Lem. Tataru},  we find
$$u_{1,f}(T,x)=\kappa_0u_{2,f}(T,x),\quad x\in B(p,2\epsilon).$$ 
Allowing $T>0$ to be arbitrary, we get \eqref{lmdf}. 

Step 3. We will show that $\mathcal T_{1,U,1}=\mathcal T_{2,U,\kappa_0}$. 
Let $p \in U$ and $p' \in M(\Gamma,r)^\inter$
and connect $p$ to $p'$
with a path $\gamma : [0,1] \to U$.
Then there is $\epsilon > 0$ such that 
$B(\gamma(t), 2 \epsilon) \subset U$ for all $t \in [0,1]$
and $B(\gamma(0), \epsilon) \subset M(\Gamma,r)^\inter$.
Now we can iteratively prove that
$$\mathcal T_{1,B(\gamma(t),\epsilon),1}=\mathcal T_{2,B(\gamma(t),\epsilon),\kappa_0},\quad t\in[0,1].$$
Since $p \in U$ can be chosen arbitrarily, we deduce that $\mathcal T_{1,U,1}=\mathcal T_{2,U,\kappa_0}$.

Step 4. We will show that $\kappa_0$ is smooth and that it has a smooth, nowhere vanishing extension to $\overline U$.
Let $x_0 \in \bar U \cap M^\inter$.
By Lemma \ref{lem_non_vanishing} there is $f \in C_0^\infty((0,\infty) \times \Src)$ and a neighborhood $B \subset M^\inter$ of $x_0$ 
such that $u_{2,f}(T,x) \ne 0$ for $x \in B$.
Therefore, 
$$
\kappa_0(x) = \frac{u_{1,f}(T,x)}{u_{2,f}(T,x)}, \quad x \in B \cap U.
$$
As $u_{j,f}(T,\cdot) \in C^\infty(\overline U)$
for both $j=1,2$,
this implies that $\kappa_0$ is smooth in $B \cap U$ and has a smooth extension to $B \cap \bar U$.
By varying $x_0$, we see that $\kappa_0$ has a smooth extension to $\overline U \cap M^\inter$. Recalling also the assumption (ii), we see that $\kappa_0$ has a smooth extension to whole $\overline U$.
The extension is unique and we denote it by $\kappa$.
The smoothness of $\kappa$ and (\ref{u1kappau2}) 
for $x \in U$
imply that 
    \begin{equation}\label{u1kappau2ext}
u_{1,f}(T,x)=\kappa(x) u_{2,f}(T,x),
    \end{equation}
for all $x \in \overline U$ and all $f \in C_0^\infty((0,\infty) \times \Src)$.
To see that $\kappa$ is nowhere vanishing in $\overline U$,
let $x \in \bar U \cap M^\inter$
and choose $f \in C_0^\infty((0,\infty) \times \Src)$ 
such that $u_{1,f}(T,x) \ne 0$ (using Lemma \ref{lem_non_vanishing} again). Now (\ref{u1kappau2ext}) implies that $\kappa(x) \ne 0$.
\end{proof}
A direct consequence of this last result is given by the following.

\begin{corollary}
\label{cor_moving_data_R}
Let $s \in J$ where $J$ is defined by (\ref{connected_set_J}). Then
\eqref{t1a} implies that there exists $\kappa_s \in \KK(\Omega_s)$ such that
$$\mathcal A_1|_{\Omega_{s}}=\kappa_s^{-1}\mathcal A_2\kappa_s|_{\Omega_{s}},\quad \mathcal T_{1,\Omega_{s},1}=\mathcal T_{2,\Omega_{s},\kappa_s}.$$
\end{corollary}
Let us also consider the following result which will be important for the gluing of the gauge class.

\begin{lemma}
\label{lem_moving_data_glueing}
Let $s_1, s_2 \in J$  where $J$ is defined by (\ref{connected_set_J}), and suppose that $s_1 < s_2$.
Then \eqref{t1a} implies that there exist $\kappa_\ell \in \KK(\Omega_{s_\ell})$, $\ell =1,2$, such that $\kappa_2|_{\Omega_{s_1}} = \kappa_1$ and such that the condition 
\begin{equation}\label{lmdga}\mathcal A_1|_{\Omega_{s_\ell}}=\kappa_\ell^{-1}\mathcal A_2\kappa_\ell|_{\Omega_{s_\ell}},\quad \mathcal T_{1,\Omega_{s_\ell},1}=\mathcal T_{2,\Omega_{s_\ell},\kappa_\ell},\ \ell=1,2\end{equation}
is fulfilled.
\end{lemma}
\begin{proof}
By Corollary \ref{cor_moving_data_R} there exist $\kappa_\ell \in \KK(\Omega_{s_\ell})$, $\ell =1,2$, such that \eqref{lmdga} is fulfilled and the proof will be completed if we can show that $\kappa_2|_{\Omega_{s_1}} = \kappa_1$. For this purpose, we remark that \eqref{lmdga} implies
$$
\mathcal T_{2,\Omega_{s_1},\kappa_1} f(x) = \mathcal T_{2,\Omega_{s_2},\kappa_2} f(x),
\quad f \in C_0^\infty((0,\infty) \times \Src),\ x \in \Omega_{s_1}.
$$ 
We fix $x \in \Omega_{s_1}$.
By Lemma \ref{lem_non_vanishing} there is $f \in C_0^\infty((0,\infty) \times \Src)$ and a neighborhood $B \subset M^\inter$ of $x$ 
such that $u_{2,f}(T,x) \ne 0$ in $B$, where $u_2$ is the solution of (\ref{eq_wave}) for $A=A_2$ and $q=q_2$.
Thus $\kappa_1 = \kappa_2$ in $B\cap \Omega_{s_1}$ and allowing $x \in \Omega_{s_1}$ to be arbitrary, we deduce that $\kappa_2|_{\Omega_{s_1}} = \kappa_1$. This completes the proof of the lemma.
\end{proof}

\begin{lemma}
\label{lem_moving_data_induction}
Let $(s_\ell)_{\ell\in\mathbb N}$ be a strictly increasing sequence  of $\R$ such that $s_\ell \in J$, $\ell\in\mathbb N$, and 
suppose that $\lim_{\ell \to \infty} s_\ell = s$. Here $J$ is defined by (\ref{connected_set_J}).
Then \eqref{t1a} implies that there exists $\kappa\in\KK(\Omega_{s})$ such that
$$\mathcal A_1|_{\Omega_{s}}=\kappa^{-1}\mathcal A_2\kappa|_{\Omega_{s}},\quad \mathcal T_{1,\Omega_{s},1}=\mathcal T_{2,\Omega_{s},\kappa}.$$
\end{lemma}
\begin{proof}
An induction using Corollary \ref{cor_moving_data_R} and Lemma \ref{lem_moving_data_glueing}
shows that, for all $\ell\in\mathbb N$,  there exists $\kappa_\ell \in \KK(\Omega_{s_\ell})$,  such that  the following conditions are fufilled 
\begin{equation}\label{lmdia}\mathcal A_1|_{\Omega_{s_\ell}}=\kappa_\ell^{-1}\mathcal A_2\kappa_\ell|_{\Omega_{s_\ell}},\quad \mathcal T_{1,\Omega_{s_\ell},1}=\mathcal T_{2,\Omega_{s_\ell},\kappa_\ell},\quad \ell\in\mathbb N,\end{equation}
\begin{equation}\label{lmdib}\kappa_{\ell+1}|_{\Omega_{s_\ell}} = \kappa_\ell,\quad \ell\in\mathbb N.\end{equation}

According to \eqref{lmdib}, the functions $\kappa_\ell$, $\ell\in\mathbb N$, fit together and give a function $\kappa_\infty$ on $\Omega_s$ defined by
$$\kappa_\infty|_{\Omega_{s_\ell}} = \kappa_\ell,\quad \ell\in\mathbb N.$$
Moreover, by \eqref{lmdia} we have
$$\mathcal A_1|_{\Omega_{s}}=\kappa_\infty^{-1}\mathcal A_2\kappa_\infty|_{\Omega_{s}}.$$
Here we recall that $\kappa_\infty$ is  smooth in $\Omega_s$, up to $\overline{\Omega_s} \cap \Rec$, $\kappa_\infty$ is nowhere vanishing in $\Omega_s$ and satisfies $\kappa_\infty = 1$ in $\overline{\Omega_s} \cap \Rec$.
Now Lemma \ref{lem_moving_data} 
implies that $\kappa_\infty$ has a smooth extension $\kappa \in \KK(\Omega_s)$.
\end{proof}

Armed with the above lemmas, we are now in position to complete the proof of the global result.

\begin{proof}[Proof of Theorem \ref{th_main_foliation}]
It remains to show that $J$ is closed. 
Let $(s_\ell)_{\ell\in\mathbb N}$ be a strictly increasing sequence  of $\R$ such that $s_\ell \in J$, $\ell\in\mathbb N$, and 
suppose that $\lim_{\ell \to \infty} s_\ell = s$. We will show that $s \in J$.
By Lemma \ref{lem_moving_data_induction}, there exists $\kappa_0 \in \KK(\Omega_s)$ such that
\begin{equation}\label{lmdia2}\mathcal A_1|_{\Omega_{s}}=\kappa_0^{-1}\mathcal A_2\kappa_0|_{\Omega_{s}},\quad \mathcal T_{1,\Omega_{s},1}=\mathcal T_{2,\Omega_{s},\kappa_0}.\end{equation}
Combining this with Lemma \ref{lem_lot_Us}, we deduce that there exist a neighborhood $\mathcal U_s\subset M_s$ of $\Sigma_s$ and $\kappa_1\in C^\infty(\mathcal U_s)$ such that
$$
\mathcal A_1|_{\mathcal U_{s}}=\kappa_1^{-1}\mathcal A_2\kappa_1|_{\mathcal U_{s}}.$$
Combining this with \eqref{lmdia2}, we obtain that for $\kappa$ defined by
$$
\kappa(x) = \begin{cases}
\kappa_0(x), & x \in \Omega_s,
\\
\kappa_1(x), & x \in \mathcal U_s,
\end{cases}
$$
and for $U_s=\Omega_s\cup\mathcal U_{s}$
we have 
$$
\mathcal A_1|_{U_{s}}=\kappa^{-1}\mathcal A_2\kappa|_{U_{s}}.$$
It is immediate that $\kappa$
is piecewise smooth,
and Lemma \ref{lem_moving_data} implies then 
that $\kappa$ is in $\KK(U_s)$. Thus $J$ is closed and since $J$ is also open in $(0,1]$, we deduce that $J=(0,1]$. In particular $1\in J$ which completes the proof.
\end{proof}

\section{Complementary results}
\label{sec_complementary_results}

In this section we show that instead of assuming exact controllability from $\Src$ and strict convexity of $\Rec$,
we may assume that exact controllability holds from either $\Src$ or $\Rec$ 
and that one of them is strictly convex. 
In the case that exact controllability holds from the set that is also strictly convex, we need the additional assumption that all the points in $M$ can be reached from the other set in time $T$.
More precisely, supposing that 
exact controllability holds from strictly convex $\Rec$, we assume that
    \begin{equation}\label{dist_S}
T > \max_{x \in M} d(x,\Src).
    \end{equation} 
Then we can determine $A$ and $q$, up to the gauge equivalence, near the strictly convex set $\Rec$ or $\Src$.

Observe first that the adjoint of $\Lambda^T_{\Src,\Rec}$
is $R \Lambda^T_{\Rec,\Src}R$ where $R$ is the time-reversal 
$R\phi(t) = \phi(T-t)$.
Thus Theorem \ref{th_main} implies that we can determine the  geometry and the lower order terms near $\Src$ if it is strictly convex and exact controllability holds from $\Rec$.

We will show next that the conclusion of Theorem \ref{th_main} holds when $\Rec$ is strictly convex, the wave equation (\ref{eq_wave}) is exactly controllable from $\Rec$, and (\ref{dist_S}) holds.
The fourth case, that is, exact controllability holds from  strictly convex $\Src$ and (\ref{dist_S}) holds with $\Src$ replaced by $\Rec$, follows then again 
by transposition.
The global uniqueness result in Theorem \ref{th_main_foliation} can also be changed in the analogous manner.
 
We used the exact controllability only once in the proof of Theorem \ref{th_main},
namely 
when we invoked Proposition
\ref{prop_localization}. 
Proposition \ref{prop_localization_Rec} below
will substitute Proposition \ref{prop_localization} in the case when 
the exact controllability holds from $\Rec$ instead of from $\Src$.

\begin{proposition}
\label{prop_localization_Rec}
Let $\mathcal X \subset M$ be open, $x \in \mathcal X$ and let $X_k \subset M$, $k\in\mathbb N$, be a sequence of neighborhoods of $x$ satisfying 
$\lim_{k \to \infty}\diam(X_k) = 0$. 
Let $\psi_0 \in C_0^\infty(\mathcal X)$ satisfy $\psi_0(x) \ne 0$.
Let $T >0$ and suppose that a sequence $(f_{kl})_{k,l=1}^\infty$ of functions in $L^2((0,T) \times \Src)$ satisfies
\begin{itemize}
\item[(0)] for all $k$, there is $\delta_k \in L^2(M)$ such that the sequence $(u_{f_{kl}}(T,\cdot))_{l = 1}^\infty$
converges weakly to $\delta_k$ in $L^2(M)$,
\item[(i)] there is $C > 0$ such that $\norm{\delta_k}_{L^2(M)} \le C |X_k|^{-1/2}$ for all $k$,
\item[(ii)] $\supp(\delta_k) \subset \bar X_k \cup (M \setminus \mathcal X)$
for all $k$,
\item[(iii)] $(\bra \delta_k, \psi_0 \cet_{L^2(M)} )_{k=1}^\infty$ converges.
\end{itemize}
Then there is $\kappa \in \C$ such that $\underset{k \to \infty}{\lim} \underset{l \to \infty}{\lim}\bra u_{f_{kl}}(T,\cdot), \psi \cet_{L^2(M)} =\kappa \psi(x)$ for all $\psi \in C_0^\infty(\mathcal X)$.

Furthermore, if (\ref{dist_S}) holds, then there is a sequence $(f_{kl})_{k,l=1}^\infty$ that satisfies (i)-(iii) and for which $\kappa = 1$.
\end{proposition}

\begin{proof}
Let $\psi \in C_0^\infty(\mathcal X)$.
Then $\supp(\delta_k \psi) \subset \bar X_k$ and
$$
\bra \delta_k, \psi\cet_{L^2(M)} = \psi(x) \bra \delta_k, 1\cet_{L^2(X_k)} + R_k,
$$
where the remainder term $R_k$ converges to zero as $k \to \infty$.
This can be seen as in the proof of Proposition \ref{prop_localization}
since $\norm{\delta_k} \le C |X_k|^{-1/2}$ for all $k$.
We choose $\psi = \psi_0$ and see that $\lim_{k \to \infty} \bra \delta_k, 1\cet_{L^2(X_k)}$
exists. We denote the limit by $\kappa$. Thus for any $\psi \in C_0^\infty(\mathcal X)$ it holds that 
$$
\underset{k \to \infty}{\lim} \underset{l \to \infty}{\lim} \bra K_{\Rec}^T f_{kl}, \psi \cet_{L^2((0,T) \times \Rec)}
= \lim_{k\to \infty}
\bra \delta_k, \psi\cet_{L^2(M)} = \kappa \psi(x).
$$

Let us now assume that 
$T > \underset{x \in M}{\max} d(x,\Src)$. Then Lemma \ref{Lem. Tataru} 
 implies that for each $k$ there is a sequence 
$(f_{kl})_{l=1}^\infty$
in $L^2((0,T) \times \Src)$ such that 
$(u_{ f_{kl}}(T,\cdot))_{l=1}^\infty$ converges to $1_{X_k} / |X_k|$ in $L^2(M)$.
Then the conditions (0), (ii) and (iii) hold.
\end{proof}

Let us now outline how the proof of Theorem \ref{th_main}
needs to be changed when Proposition
\ref{prop_localization} is replaced by Proposition \ref{prop_localization_Rec}.
Let $x \in C_p(\K)$ let $X_{k,x}$ and $\mathcal X$
be as in the proof of Theorem \ref{th_main}.
We use the shorthand notation $X_k = X_{k,x}$
and do not emphasize the dependence on $x$ in the notation below.

As in the proof of Proposition \ref{prop_localization_Rec},
let $(f_{kl})_{k,l=1}^\infty$
in $L^2((0,T) \times \Src)$ be such that 
$(u_{1,f_{kl}}(T,\cdot))_{l=1}^\infty$ converges to $1_{X_k} / |X_k|$ in $L^2(M)$.
Note that
\begin{equation*}
\lim_{k\to+\infty} \lim_{l\to+\infty} \left\langle u_{1,f_{kl}}(T,\cdot), \psi \right\rangle_{L^2(M)}=\psi(x),\quad \psi\in C^\infty_0(\mathcal X ).\end{equation*}
We will show that the conditions (0)-(iii) of Proposition \ref{prop_localization_Rec} are fulfilled with respect to $j=2$.
Then Proposition \ref{prop_localization_Rec} implies the following analogue of equation (\ref{th2b}) in the proof of Theorem \ref{th_main},
    \begin{equation*}
\underset{k \to \infty}{\lim} \underset{l \to \infty}{\lim}\bra u_{2,f_{kl}}(T,\cdot), \psi \cet_{L^2(M)} =\kappa \psi(x), \quad \psi\in C^\infty_0(\mathcal X ).
    \end{equation*}
The rest of the proof of Theorem \ref{th_main} is unchanged.

Analogously to (\ref{th2a}), we have
\begin{equation}\label{th2a_mod} \begin{aligned}\left\langle u_{1,f_{kl}}(T,\cdot), v_{1,\phi}(T,\cdot) \right\rangle_{L^2(M)}
=\left\langle u_{2,f_{kl}}(T,\cdot), v_{2,\phi}(T,\cdot) \right\rangle_{L^2(M)},\end{aligned}\end{equation}
for $\phi \in L^2((0,T) \times \Rec)$.
Therefore
    \begin{equation}\label{weak_conv}
\lim_{l \to \infty} 
\left\langle u_{2,f_{kl}}(T,\cdot), v_{2,\phi}(T,\cdot) \right\rangle_{L^2(M)} = 
\left\langle1_{X_k} / |X_k|, v_{1,\phi}(T,\cdot) \right\rangle_{L^2(M)}.
    \end{equation}
Recall that Hilbert spaces are sequentially weakly complete,
see e.g. \cite[Th. V.7]{Yosida}.
Assuming exact controllability from $\Rec$, we have
    \begin{equation}\label{exact_L2}
\{v_{2,\phi}(T,\cdot);\ \phi \in L^2((0,T) \times \Rec)\}
= L^2(M),
    \end{equation}
and hence (\ref{weak_conv}) implies that $(u_{2,f_{kl}}(T,\cdot))_{l \in \N}$ converges weakly in $L^2(M)$.
This establishes (0), and we denote the weak limit by $\delta_k$.

It follows from (\ref{weak_conv}), using the Cauchy-Schwarz inequality, that 
\begin{equation*}\begin{aligned}
\left\langle |X_k|^{1/2} \delta_k, v_{2,\phi}(T,\cdot) \right\rangle_{L^2(M)} 
&= 
\left\langle1_{X_k} / |X_k|^{1/2}, v_{1,\phi}(T,\cdot) \right\rangle_{L^2(M)}
\\&\le \norm{v_{1,\phi}(T,\cdot)}_{L^2(M)}.
\end{aligned}\end{equation*}
Thus (\ref{exact_L2}) together with the uniform boundedness theorem implies that 
$(|X_k|^{1/2} \delta_k)_{k \in \N}$ is bounded in $L^2(M)$.
Therefore (i) holds. 
Using again (\ref{weak_conv}), we have
    \begin{equation*}
\left\langle \delta_k, v_{2,\phi}(T,\cdot) \right\rangle_{L^2(M)} = 
\left\langle1_{X_k} / |X_k|, v_{1,\phi}(T,\cdot) \right\rangle_{L^2(M)} = 0
    \end{equation*}
for all $\phi \in C_0^\infty(\B(\Rec, h_{k,x}; T))$, 
since $v_{1,\phi}(T,\cdot)$ is supported in 
the set $M(\Rec, h_{k,x})$ that is disjoint with $X_k$,
see Proposition \ref{prop_convergence_to_pt}.
Thus the density result of Lemma \ref{Lem. Tataru} implies (ii). 
Using once more (\ref{weak_conv}), and recalling that $\mathcal X = M(B_{\partial M}(y,\epsilon), s+\epsilon)^\inter$, we have
for $\phi\in C^\infty_0(\B(B_{\partial M}(y,\epsilon), s+\epsilon;T ))$,
$$
\lim_{k \to \infty}
\lim_{l \to \infty} 
\left\langle u_{2,f_{kl}}(T,\cdot), v_{2,\phi}(T,\cdot) \right\rangle_{L^2(M)} = v_{1,\phi}(T,x).
$$
Thus, condition (iii) of Proposition \ref{prop_localization_Rec} is also fulfilled when $\phi$ is chosen so that $v_{2,\phi}(T,x) \ne 0$.

\medskip

\noindent{\bf Acknowledgements.} 
Y.\ Kian was supported by  the French National Research Agency ANR (project MultiOnde) grant ANR-17-CE40-0029,
M.\ Lassas by Academy of Finland, projects 263235, 273979 and 284715,
Y.\ Kurylev by the Engineering and Physical Sciences Research Council (EPSRC), UK, grant EP/L01937X/1,
and L.\ Oksanen by the EPSRC grant EP/L026473/1.

\end{document}